\documentclass[12pt]{amsart}

\usepackage{amsmath,color,tikz-cd}
\usepackage{amsthm}
\usepackage{hyperref}
\usepackage[margin=1.0in]{geometry}
\usepackage{tikz}
\usetikzlibrary{arrows.meta}
\usepackage{comment}
\numberwithin{equation}{section}

\newtheorem{prop}{Proposition}
\newtheorem{lemma}[prop]{Lemma}

\newtheorem{thm}[prop]{Theorem}

\numberwithin{prop}{section}

\theoremstyle{definition}

\newtheorem{rmk}[prop]{Remark}

\newcommand{\brs}[1]{\left| #1 \right|}

\newcommand{\gD}{\Delta}

\newcommand{\gl}{\lambda}

\newcommand{\N}{\nabla}

\newcommand{\IP}[1]{\left<#1\right>}

\DeclareMathOperator{\Spin}{Spin}

\DeclareMathOperator{\Tor}{Tor}

\DeclareMathOperator{\Int}{Int}

\newcommand{\R}{\mathbb R}

\newcommand{\Aroof}{\hat{\mathcal A}}

\renewcommand{\bar}[1]{\overline{#1}}

\begin{document}

\title[Topology of low-dimensional string backgrounds]{Topology of low-dimensional generalized Ricci solitons and string backgrounds}

\author{Jeffrey Streets}
\address{Rowland Hall\\
         University of California, Irvine\\
         Irvine, CA 92617}
\email{\href{mailto:jstreets@uci.edu}{jstreets@uci.edu}}

\date{\today}

\begin{abstract} Adapting ideas of \cite{akutagawa2007perelman}, we show that compact generalized Ricci solitons (GRS) have positive Yamabe invariant.  We observe a Cheeger-Gromoll-type splitting theorem for GRS as a corollary of the splitting theorem for Bakry-\'Emery Ricci curvature in \cite{wei2009comparison}.  Using this we show that low dimensional GRS are diffeomorphic to $S^3 / \Gamma$ or $S^3 \times S^1 / \Gamma$.  We determine various topological constraints on string backgrounds (Bismut-Hermitian-Einstein (BHE), strong torsion $G_2$, strong torsion $\Spin(7)$-manifolds) and show in most cases that they cannot exist on the same manifolds as their classical special holonomy counterparts.  Finally we determine the topology of BHE threefolds under natural constraints, relying on an extension of parts of Kollar's characterization of Seifert fibered $5$-manifolds over complex orbifolds \cite{kollar2006circle}.
\end{abstract}

\maketitle

\section{Introduction}

Generalized Ricci solitons arise as critical points of the string effective action \cite{Friedanetal}, and describe the geometry of string backgrounds \cite{Strominger1986}, defined here as Riemannian metrics admitting a compatible connection with closed skew symmetric torsion and reduced holonomy.  This is a subject of intense recent activity (e.g. this incomplete list from only the last few years:  \cite{apostolov2025rigidity,apostolov2025toric,apostolov2026pluriclosed,barbaro2023bismut,brienza2026structure,carmona2026cylindrical,fino2025some,ivanov2023riemannianspin7,ivanov2023riemannianG2,JFS,lauret2023bismut,lee2025dynamical,podesta2023bismut,podesta2024infinite,podesta2025three,womack2026classification}).  Despite the deep roots of this subject and the recent activity, little is known about the topology of such manifolds.  We build some some general theory in this direction and use it to prove definitive topological classification results.

To begin we investigate the topology of generalized Ricci solitons in all dimensions.  First, we make the elementary observation that $b_3 \geq 1$.  Next, inspired by \cite{akutagawa2007perelman}, we show the Yamabe invariant of the conformal class is nonnegative.  We further observe that the splitting result for Bakry-\'Emery Ricci curvature \cite{wei2009comparison} naturally splits generalized Ricci solitons, leading to a Cheeger-Gromoll-type splitting theorem (cf. Theorem \ref{t:splitting}).  We combine these tools to prove a topological classification in dimensions $3$ and $4$.  Note that in the statements below the adjective \emph{nontrivial} indicates that $H \neq 0$.
\begin{thm} \label{t:lowdGRStopology} 
The following hold:
\begin{enumerate}
\item If $(M^3, g, H, f)$ is a nontrivial compact GRS, then $M^3 \cong S^3 / \Gamma$.
\item If $(M^4, g, H, f)$ is a nontrivial compact GRS, then $M^4 \cong S^3 \times S^1 / \Gamma$.
\end{enumerate}
\end{thm}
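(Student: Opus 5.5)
The plan is to combine the three general tools announced in the introduction: the observation $b_3 \geq 1$ for nontrivial GRS (since $H$ is closed and, in the soliton gauge, harmonic after the conformal/weighted correction), positivity of the Yamabe invariant, and the Cheeger--Gromoll-type splitting (Theorem \ref{t:splitting}). Recall the GRS equations
\begin{align*}
\mathrm{Rc} - \tfrac14 H^2 + \N^2 f = 0, \qquad d^*_g H + \N f \lrcorner H = 0, \qquad dH = 0,
\end{align*}
so the Bakry--\'Emery tensor satisfies $\mathrm{Rc}_f = \tfrac14 H^2 \geq 0$. Passing to the universal cover $\tilde M$ gives a complete manifold with $\mathrm{Rc}_f \geq 0$ and bounded $f$. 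Therefore the splitting theorem of \cite{wei2009comparison} applies, and $\tilde M \cong N \times \R^k$ with $N$ compact and all structure ($g$, $H$, $f$) splitting accordingly. Then $\pi_1(M)$ is virtually $\mathbb{Z}^k$, in the standard Cheeger--Gromoll fashion.

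\textbf{Dimension 3.} Nontriviality forces $H = \phi\, dV_g$ with $\phi \neq 0$ somewhere. First I use the splitting to rule out $k \geq 1$: if $\tilde M = N^{3-k} \times \R^k$, then $H$ is a $3$-form on the product that must be parallel in the $\R$ directions. But $H^2(\partial_t,\cdot)=0$ forces $\iota_{\partial_t}H = 0$, so $H=0$ when $\dim N < 3$. Hence $k=0$ and $\pi_1$ is finite. Alternatively, the Yamabe invariant is positive, so by Schoen--Yau/Gromov--Lawson together with geometrization, $M$ is a connected sum of spherical space forms and $S^2\times S^1$'s. The splitting excludes nontrivial free products, since $\pi_1$ is virtually abelian, and excludes $S^2\times S^1$ (which would give $k=1$). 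With $\pi_1$ finite, Perelman's elliptization gives $M \cong S^3/\Gamma$.

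\textbf{Dimension 4.} Again $\tilde M = N^{4-k}\times\R^k$. The same argument shows $H$ must live on $N$, so $\dim N\geq 3$ and $k\leq 1$. If $k=0$, then $\pi_1$ is finite and $b_3(M)=b_1(M)=0$, contradicting $b_3\geq 1$. So $k=1$, and $N^3$ is a compact simply connected nontrivial GRS (nontrivial because $H$ lives on $N$). By part (1), $N\cong S^3$. Thus $\tilde M = S^3\times\R$, and $\pi_1(M)$ acts by isometries of the product metric, preserving the splitting. One then checks that $M$ is finitely covered by $S^3\times S^1$, i.e. $M\cong (S^3\times S^1)/\Gamma$.

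The main obstacle is making the splitting compatible with $H$ and $f$: showing that the line directions are parallel for $g$ and annihilate $H$, and that $f$ is constant along them. This needs the rigidity part of the Wei--Wylie argument together with the equation $\mathrm{Rc}_f(\partial_t,\partial_t)=\tfrac14|\iota_{\partial_t}H|^2=0$. I would also need care in the final step identifying the quotient: the $\pi_1$ action on $S^3\times\R$ only a priori preserves the product structure up to isometries of each factor, and one must pass to a finite-index $\mathbb{Z}$ acting by translation.
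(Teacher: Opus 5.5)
Your proposal is correct and follows essentially the paper's proof. The splitting with $\iota_{\partial_t}H=0$ (so $H$ is basic, e.g.\ by Cartan's formula since $dH=0$) forces $k=0$ in dimension $3$, after which elliptization applies; in dimension $4$, the bound $b_3\geq 1$ excludes $k=0$, $\dim N\geq 3$ excludes $k\geq 2$, and part (1) gives $N\cong S^3$.

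The Yamabe/Schoen--Yau alternative is unnecessary, and its claim that virtual abelianness excludes free products is false: $\pi_1(\mathbb{RP}^3\#\mathbb{RP}^3)=\mathbb{Z}/2*\mathbb{Z}/2$ is virtually $\mathbb{Z}$. This does no harm, since your main route already disposes of every infinite $\pi_1$ through $k\geq 1$.
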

\noindent We furthermore determine various topological characteristics of string backgrounds, including vanishing results for the Dolbeault cohomology of Bismut-Hermitian-Einstein (BHE) manifolds, and vanishing of all top degree characteristic classes on strong torsion $\Spin(7)$-manifolds.  These results in particular show that in these cases, as well as strong torsion $G_2$ structures, the moduli space is disjoint from their classical special holonomy counterparts (cf. Remark \ref{r:isolatedmoudli}).

We then turn to a more refined investigation of the topology of BHE threefolds.  As shown in \cite{JFS}, non-K\"ahler BHE come equipped with a canonical rank $2$ distribution of Bismut-parallel vector fields.  The simplest possible case is when these vector fields generate a principal $T^2$ action, so the quotient space is smooth, and we refer then to $M$ as regular.  Our next main theorem classifies the possible topologies in this setting: 

\begin{thm} \label{t:BHE3foldssmooth} Let $(M^6, g, J)$ be a compact non-K\"ahler regular BHE manifold.  Then a finite cover of $M$ is diffeomorphic to $N \times T^k$ where either
\begin{enumerate}
    \item $k = 0$ and $N \cong \#^{r-1} S^2 \times S^4 \#^r S^3 \times S^3$, \ $1 \leq r \leq 8$,
    \item $k = 1$ and $N \cong \#^9 S^2 \times S^3$,
    \item $k = 3$ and $N \cong S^3$.
\end{enumerate}
\end{thm}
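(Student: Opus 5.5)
The strategy is to pass to the quotient by the $T^2$ action, classify the base as a complex surface with restricted geometry, and then identify the total space of the principal $T^2$-bundle up to finite cover. By \cite{JFS}, a non-K\"ahler BHE manifold carries the Bismut-parallel vector fields $\theta^\sharp$ and $J\theta^\sharp$. These are Killing and holomorphic, and the associated connection one-forms have curvatures that are basic $(1,1)$-forms on the quotient. Regularity means $M \to B = M/T^2$ is a principal $T^2$-bundle over a smooth compact complex surface $B$, so $M$ is a principal $U(1)\times U(1)$ bundle over $B$.

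Next I reduce the BHE equations to the base. Writing $g = g_B + (\theta)^2 + (J\theta)^2$, the conditions $\rho^B=0$, $dH=0$ and Bismut-parallelism of $\theta$ should reduce to the following. The base metric $g_B$ is K\"ahler with Ricci form determined by the curvature forms $F_1,F_2$ of the two circle factors. Concretely, the $J\theta$ direction has curvature proportional to $\omega_B$ or to $\rho_B$, and the $\theta$ direction has curvature a multiple of a K\"ahler/Ricci-type form. Together with $dH=0$, which gives $F_1\wedge F_1 + F_2\wedge F_2 = 0$ up to constants (the anomaly-type relation), this should force $B$ to have $c_1(B)\ge 0$ in a suitable sense. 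The expected outcome, possibly after finite covers, is a trichotomy:
\begin{itemize}
\item[(a)] $B$ is a del Pezzo surface, i.e.\ K\"ahler--Einstein with $c_1>0$, so $B = \mathbb{CP}^2 \# k\overline{\mathbb{CP}}^2$ with $k\le 8$, or $\mathbb{CP}^1\times\mathbb{CP}^1$;
\item[(b)] $B$ is a product $\mathbb{CP}^1 \times T^2$, which is Ricci-flat in one direction;
\item[(c)] $B$ is flat, i.e.\ $T^4$ up to finite cover.
\end{itemize}
I will have to check which of these the equations actually allow; the expected answer matches $k=0,1,3$.

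For the topology of the total space in each case, I proceed as follows.

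In case (a), I choose a basis change of $T^2$ so that one circle bundle has Euler class the primitive class $c_1(B)/d$ and the other is trivial or torsion. If both classes are nontrivial and independent, I instead apply the classification of simply connected spin $6$-manifolds with $T^2$ actions. Two sub-cases arise:
\begin{itemize}
\item If the Euler class $e_2$ is nonzero, the total space is simply connected after a finite cover. Its $H_2$ has rank $b_2(B)-2 = r-1$ and its $b_3 = 2r$. Since $w_2=0$ and the manifold is torsion-free, Wall's classification of simply connected spin $6$-manifolds, combined with the Smale/Barden-type results and Kollar's computations \cite{kollar2006circle} for the intermediate $5$-manifold, gives $\#^{r-1}S^2\times S^4 \#^r S^3\times S^3$ with $r = b_2(B)-1 \le 8$.
\item If $e_2$ is torsion, we get $N^5\times S^1$. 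Here $N^5$ is a simply connected circle bundle over a del Pezzo surface, hence $\#^{b_2-1} S^2\times S^3$ by Smale. The constraint from $dH=0$ should force $b_2(B)=10$, which is impossible for del Pezzo surfaces. That is inconsistent, so case (2) must come from a different base.
\end{itemize}

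Reconciling case (2) is the crux. A base $B$ that is an elliptic surface with $\chi=12$, or a K3-like quotient, has a circle bundle with $b_2 = 9$, giving $\#^9 S^2\times S^3$. Such $B$ is likely an orbifold, needing Kollar's Seifert classification extended as the abstract says. So I will allow $B$ to be an orbifold rather than a manifold, and use the extension of \cite{kollar2006circle} to identify the $5$-dimensional Seifert circle bundle. In case (c), a finite cover is a nilmanifold-type $T^2$ bundle over $T^4$; the BHE constraint forces one Euler class zero and the other making $S^3\times T^3$ (via the Hopf fibration over $S^2$ times a torus after rearranging). I will confirm this using $b_1$ and the Bismut-flat classification.

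The main obstacle is the precise reduction of $dH=0$ and the Einstein condition to the base, and pinning down exactly which bases and Euler classes occur. In particular I must show that the $b_2=9$ case arises, presumably from a base with $c_1^2=0$ such as a rational elliptic surface, and that the del Pezzo case yields exactly $\#^{r-1}S^2\times S^4\#^rS^3\times S^3$ with $r = 10-k-1 \le 8$. I will attempt it by computing $H$ explicitly as $\theta\wedge F_1 + J\theta\wedge F_2$ plus base terms.
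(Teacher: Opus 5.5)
There is a genuine gap at the core of your proposal. The classification of the base is only conjectured (``should reduce'', ``should force''), and the conjectured trichotomy is wrong, which is exactly why you cannot locate case (2).

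What the reduced equations actually give, combined with the Kodaira classification, is not that $B$ is del Pezzo but the constraint of \cite[Lemma 2.3]{apostolov2026pluriclosed}. Namely, $B$ is a blowup of $\mathbb P^1\times\mathbb P^1$ or of a Hirzebruch surface at no more than $8$ points, or a minimal ruled surface over an elliptic curve. Moreover $c_1(B)^2\ge 0$, with equality if and only if $V$ generates a product action.

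Your finding that a product circle factor forces $b_2(B)=10$ is therefore the answer, not an inconsistency. For these rational surfaces $c_1(B)^2=10-b_2(B)$, so case (2) comes from a \emph{smooth} Hirzebruch surface blown up at eight points, with $c_1(B)^2=0$. Lemma \ref{lem:S1bundle} then gives $\#^9S^2\times S^3$, using that $N$ is spin by Lemma \ref{l:spin} and $H_2(N)\cong\ker\langle e,\cdot\rangle\cong\mathbb Z^9$. Allowing orbifold bases contradicts regularity.

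The organizing tool you never use is Theorem \ref{t:splitting}. It supplies the finite cover $N\times T^k$ with $N$ simply connected, and it disposes of $k\ge2$ via Theorem \ref{t:lowdGRStopology} with no base analysis. The remaining cases go as follows.
\begin{enumerate}
\item For $k=1$, Lemma \ref{l:rank1structure} shows the split circle is generated by $V$ itself. A horizontal component would give a Bismut-parallel horizontal field, forcing Bismut-flatness and $k=3$. This is what forces $c_1(B)^2=0$.
\item For $k=0$, the action cannot be a product, so $c_1(B)^2>0$ and $b_2(B)\le 9$. Simple connectivity forces $b_2(B)\ge 2$, giving $1\le r\le 8$.
\item Your case (b) must be excluded when $k\le 1$; the paper uses $h^{1,0}(B)=1$ against Proposition \ref{p:BHEtopology}.
\item Your case (c) is spurious: a principal $T^2$-bundle over a flat four-manifold is aspherical, so it can never be finitely covered by $S^3\times T^3$.
\end{enumerate}

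Even granting the base, your identification of the total space for $k=0$ is incomplete. The Wall--Jupp classification of simply connected spin six-manifolds needs torsion-free homology (which must be proved), $b_3$, the cubic cup form on $H^2$, and $p_1$. You supply only $w_2=0$ and ranks.

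In the paper these all start from the exact sequence $0\to\pi_2(M)\to\pi_2(B)\xrightarrow{\delta}\mathbb Z^2\to0$, which gives $H_2(M)\cong\ker\delta\cong\mathbb Z^{b_2(B)-2}$. It also shows that for simply connected $M$ no choice of basis of $T^2$ makes an Euler class vanish. So your ``basis change'' dichotomy is really the dichotomy $k=0$ versus $k=1$. The paper then proceeds as follows.
\begin{enumerate}
\item Lemma \ref{lem:torsionfree} gives torsion-free homology.
\item $\chi(M)=0$ gives $b_3=2b_2(B)-2$.
\item The cup product on $H^2(M)=\pi^*H^2(B)$ vanishes, because unimodularity of the intersection form on $B$ and primitivity of $e_1$ give $\pi^*H^4(B)=0$.
\item $p_1(M)=0$ by Lemma \ref{lem:p1-vanishing}, and $M$ is spin by Lemma \ref{l:spin}.
\end{enumerate}
No Koll\'ar-type input or intermediate five-manifold is needed in the regular case.
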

\noindent The proof starts from the Cheeger-Gromoll splitting discussed above.  The cases $k \geq 2$ are treated by Theorem \ref{t:lowdGRStopology}.  In case $k = 1$, we exploit the Smale-Barden classification \cite{barden1965simply, smale1962structure}, with the main point to show that the second homology of the split five-manifold is torsion-free if the quotient satisfies this property.  The case $k = 0$ is treated using the Wall-Jupp classification \cite{jupp1973classification, wall1966classification}, where again the main issue is to show that the second homology is torsion-free if that of the quotient is torsion-free.  With these in place the result essentially follows from the analysis of smooth quotients of \cite[Lemma 2.3]{apostolov2026pluriclosed}, obtained via Kodaira classification, with one refinement from a Bochner formula (cf. Proposition \ref{p:BHEtopology}) which rules out ruled surfaces over an elliptic curve.

Building on the ideas behind Theorem \ref{t:BHE3foldssmooth}, we next consider the case when the symmetries generate a locally free $T^2$ action, and refer then to $M$ as quasiregular.  Recent breakthrough work on BHE threefolds \cite{apostolov2026pluriclosed} exhibited such examples with $k = 1$ whose quotient has orbifold singularities at divisors of genus zero.  Given the deep links of this problem to toric geometry, it is natural to conjecture that the orbifold locus always consists of rational curves.  Our next main theorem classifies the topology of BHE threefolds with infinite fundamental group under this hypothesis:

\begin{thm} \label{t:BHE3foldskollar} Let $(M^6, g, J)$ be a compact non-K\"ahler quasiregular BHE manifold with $\brs{\pi_1(M)} = \infty$, and suppose the orbifold divisors all have genus zero.  Then a finite cover of $M$ is diffeomorphic to $N \times T^k$ where either
\begin{enumerate}
    \item $k = 1$ and $N \cong \#^r S^2 \times S^3,\ r \geq 1$,
    \item $k = 3$ and $N \cong S^3$.
\end{enumerate}
\end{thm}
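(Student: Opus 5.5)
We follow the same route as for Theorem \ref{t:BHE3foldssmooth}. We begin from the Cheeger-Gromoll-type splitting (Theorem \ref{t:splitting}). A non-K\"ahler BHE manifold is a nontrivial GRS, with $f$ constant up to the usual normalization. Hence a finite cover of $M$ splits isometrically as $N^{6-k} \times T^k$, with $N$ compact and simply connected. Since $\brs{\pi_1(M)} = \infty$, we have $k \geq 1$.

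\textbf{Step 1 (large $k$).} If $k \geq 2$, then $N$ has dimension at most $4$ and carries a nontrivial simply connected GRS structure. Theorem \ref{t:lowdGRStopology} then applies.
\begin{itemize}
\item If $k = 3$, then $N \cong S^3$, which is case (2).
\item If $k = 2$, then $N^4 \cong (S^3 \times S^1)/\Gamma$. This is not simply connected, a contradiction.
\item If $k \geq 4$, then $N$ has dimension at most $2$. It would have to be $S^2$ or a point, and neither carries a nonzero closed $3$-form $H$ in the factor. This contradicts nontriviality, using $b_3 \geq 1$ applied to the factor.
\end{itemize}
One must check that the splitting factors carry the GRS structure with $H$ nonzero on $N$. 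We expect this to follow from parallelism of the flat factor in Theorem \ref{t:splitting}.

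\textbf{Step 2 ($k=1$).} This is the main case. $N^5$ is simply connected and compact, and the canonical Bismut-parallel distribution restricts to it. The torus factor absorbs one direction, so $N$ carries a locally free circle action whose quotient is the complex orbifold surface $X$ underlying the quasiregular quotient. By the Smale-Barden classification, $N \cong \#^r S^2 \times S^3$ once we show two things: $H_2(N;\mathbb Z)$ is torsion-free, and $N$ is spin. For the spin condition we use that the $S^1$-bundle structure comes from the BHE geometry, with $w_2$ determined by the Chern class of the orbibundle and $c_1(X)$, so that it vanishes. Torsion-freeness is where Koll\'ar's characterization \cite{kollar2006circle} enters. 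For a Seifert $S^1$-bundle $N \to (X,\sum(1-\frac1{m_i})D_i)$ with $N$ simply connected, Koll\'ar computes
\[
\Tor H_2(N,\mathbb Z) \cong \bigoplus_i (\mathbb Z/m_i)^{2g(D_i)},
\]
provided $H_2(X)$ and the orbifold data are suitable. The hypothesis that all orbifold divisors have genus zero kills every summand. The hard part is to extend this portion of Koll\'ar's argument (stated for Seifert bundles over orbifolds with codimension-one locus, plus isolated points) to our setting. In particular we must verify that $H_2(X,\mathbb Z)$ of the underlying surface is torsion-free. For this we would invoke the classification of the quotients in \cite[Lemma 2.3]{apostolov2026pluriclosed} (rational surfaces, with elliptic ruled surfaces excluded by Proposition \ref{p:BHEtopology}). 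We would also rerun the Leray spectral sequence computation for $N \to X$ with local coefficients near the divisors.

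\textbf{Step 3 (consistency).} Finally, $r \geq 1$. If $r = 0$, then $N \cong S^5$ and $b_2(N) = 0$. However, the transverse K\"ahler-type form on $X$ must pull back nontrivially, or equivalently the Bismut-parallel structure forces $b_2(X) \geq 2$. Since $b_2(N) = b_2(X) - 1$, this gives $r \geq 1$. We expect Step 2, the torsion computation via Koll\'ar, to be the main obstacle.
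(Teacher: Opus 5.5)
Your skeleton matches the paper's: the splitting, Theorem \ref{t:lowdGRStopology} for $k\geq 2$, then Koll\'ar and Smale--Barden for $k=1$. Step 1 is fine. The $k=1$ case, however, has gaps at each load-bearing step.

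\textbf{The reduction to a circle action on $N$.} Your claim that ``the torus factor absorbs one direction'' is exactly the content of Lemma \ref{l:rank1structure}, and it needs an argument. You must show that the Killing field $Z$ generating the split $S^1$ has no component orthogonal to $\mathrm{span}\{V,JV\}$; otherwise the Bismut holonomy is trivial and in fact $k=3$. You must then show, using the equation satisfied by $JV$, that $Z$ is a multiple of $V$. Only then does $JV$ induce a locally free circle action on $N$ itself.

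\textbf{The torsion step.} Once that is in place, $N\to(X,\Delta)$ is a simply connected Seifert $S^1$-bundle. This is precisely the setting of \cite[Proposition 28]{kollar2006circle}, which the paper applies as stated. No extension of Koll\'ar's argument and no separate check on $H_2(X)$ is needed. An extension is required only for the $k=0$ case, which is what Proposition \ref{p:torsion-formula} supplies. Your proposed workaround would not have worked in any case: \cite[Lemma 2.3]{apostolov2026pluriclosed} classifies only smooth quotients, so it says nothing about the orbifold $X$ here.

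\textbf{The spin step.} Your spin argument simply asserts that $w_2$ vanishes. The paper obtains this from Lemma \ref{l:spin} applied to the cover $N\times S^1$, whose fundamental group $\mathbb Z$ is free abelian. Then $w_2(N\times S^1)=\mathrm{pr}_N^*w_2(N)$ gives $w_2(N)=0$.

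\textbf{The bound $r\geq 1$.} Step 3 does not prove this. The claim that the transverse form must pull back to a nonzero class is unsupported, and it fails in the model situation. For the Hopf fibration $S^5\to\mathbb P^2$, the transverse K\"ahler form pulls back to the exact form $d\eta$. Likewise, $b_2(X)\geq 2$ is asserted rather than proved.

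The gap closes with results you already cite. By Theorem \ref{t:splitting}, $(g_N,H_N,f_N)$ is a GRS. It is nontrivial, since $H=\pi_N^*H_N\neq 0$ (non-K\"ahler). Proposition \ref{p:b3bound} then gives $b_3(N)\geq 1$. Poincar\'e duality on the simply connected $5$-manifold $N$ gives $b_2(N)=b_3(N)\geq 1$, which excludes $N\cong S^5$.
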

\noindent As in the proof of Theorem \ref{t:BHE3foldssmooth}, the main point is to show that $H_2(N)$ is torsion-free, which is provided by the work of Kollar \cite{kollar2006circle} using our structural hypothesis on the orbifold.  

Finally we turn to the case of effective $T^2$ action and finite fundamental group.  Again nontrivial examples of this type were constructed in \cite{apostolov2026pluriclosed}, and again the quotient has orbifold divisors of genus zero meeting at orbifold points.  Under this hypothesis and a further structural assumption on the stabilizers discussed below, we classify the topology:
\begin{thm} \label{t:BHE3foldsrank0} Let $(M^6, g, J)$ be a compact non-K\"ahler quasiregular BHE manifold with $\pi_1(M) = 0$.  Assume the orbifold divisors have genus zero and that any intersecting divisors have coprime stabilizer subgroups.  Then $M \cong \#^{r-1} S^2 \times S^4 \#^r S^3 \times S^3$.
\end{thm}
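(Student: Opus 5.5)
The plan is to identify $M$ through the Wall--Jupp classification of simply connected closed $6$-manifolds with $H_2$ torsion-free and $w_2$-type invariants, and to compute the needed invariants from the $T^2$-orbifold fibration $M \to X = M/T^2$.

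\emph{Setup.} By \cite{JFS} the Bismut-parallel rank $2$ distribution generates an effective, locally free $T^2$-action. We will use the quotient $X$, a compact complex orbifold surface; the argument of \cite[Lemma 2.3]{apostolov2026pluriclosed}, together with the Bochner refinement of Proposition \ref{p:BHEtopology}, restricts $X$. Since $\pi_1(M)=0$, the exact sequence $\pi_1(T^2)\to\pi_1(M)\to\pi_1^{orb}(X)\to 0$ forces $\pi_1^{orb}(X)=0$. We then expect $X$ to be a rational surface with $b_2(X)=r+1$, its underlying space $|X|$ being simply connected.

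\emph{Homology.} We will factor $M\to X$ as a tower of two Seifert circle bundles, $M\to Y^5\to X$. For the first step we apply Kollar's results \cite{kollar2006circle}, as extended in the proof of Theorem \ref{t:BHE3foldskollar}. Because the orbifold divisors have genus zero, $H_2(Y)$ is torsion-free, and its torsion would otherwise be contributed by $H^1$ of the divisors.

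The second step is a circle bundle over $Y$ that can be singular only over the preimages of the divisor intersection points. We want $H_2(M)$ torsion-free, and the coprimality of the stabilizers at intersecting divisors is exactly what should make these local contributions vanish. Concretely, near an intersection point the action looks like $T^2$ acting on $\mathbb C^2/\Gamma$ with cyclic stabilizers $\mathbb Z_p$ and $\mathbb Z_q$, $\gcd(p,q)=1$. The $T^2$-orbits there are lens-space-free, so the Mayer--Vietoris contributions have no torsion.

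Next we compute $b_2(M)$ and $b_3(M)$ from the Gysin/Leray spectral sequence with rational coefficients. The two Euler classes are linearly independent in $H^2(X;\mathbb Q)$: for a non-K\"ahler BHE structure they are the curvatures of the connection, and one of them is essentially $c_1$ of $X$. This gives $b_2(M)=b_2(X)-2=r-1$. Poincar\'e duality and $\chi(M)=0$, which holds because of the free-rank torus action, then give $b_3(M)=2r$.

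\emph{Classification.} We will show that $M$ is spin, and that the cup product form $H^2\times H^2\to H^4$, which is trilinear into $\mathbb Z$, vanishes. The cubic form should vanish because the pulled-back classes from $X$ satisfy $\pi^*a\cdot\pi^*b\cdot\pi^*c = 0$, since $H^6(X)=0$. We also need $p_1$ to vanish; it is pulled back from $X$, where $p_1(X)\in H^4(X)$, and its pairing with $H^2(M)$ is likewise zero. Wall--Jupp then identifies $M$ with $\#^{r-1} S^2\times S^4\,\#^r S^3\times S^3$. For spinness, $w_2(M)=\pi^*w_2(X)$; we will show that $w_2(X)$ mod $2$ lies in the span of the Euler classes (via $c_1$), so it dies in $M$. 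If $w_2\neq 0$, the nonspin Jupp model with trivial invariants, which is $S^2\tilde\times S^4$ summed, would have to be ruled out or allowed, and we will check this separately.

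\emph{Main obstacle.} We expect the hardest step to be torsion-freeness of $H_2(M)$ at the divisor intersection points, and in particular making precise the local orbifold computation that uses coprimality.
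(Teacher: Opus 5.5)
\textbf{There is a genuine gap: torsion-freeness of $H_2(M)$, the step that carries the theorem, is not established, and the tower $M\to Y\to X$ does not supply it.}

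Your second step assumes a circle $S^1_v\subset T^2$ whose non-free orbits lie only over divisor intersection points. Such a circle must meet every divisor stabilizer $\Gamma_i$ and every isolated-point stabilizer trivially. Nothing in the hypotheses provides one. For example, if the three order-$2$ subgroups of $T^2$ occur as stabilizers of pairwise disjoint divisors, coprimality says nothing, yet every $S^1_v$ contains $[v/2]$, which is one of them.

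The scenario also defeats itself. A nontrivial element of $S^1_v\cap\Gamma_x$ at an intersection point lies in neither $\Gamma_i=\ker\chi_1$ nor $\Gamma_j=\ker\chi_2$. So $S^1_v\cap\Gamma_x$ acts freely on the unit sphere of the slice $\mathbb C^2$, and $Y$ acquires a cone-on-a-lens-space singularity along that orbit. Koll\'ar's formula \cite{kollar2006circle} needs a smooth simply connected Seifert total space, so it then does not apply to $Y\to X$.

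Both steps make sense only when $S^1_v$ acts freely. In that case your argument is fine and short, since $H_2(M)\cong\pi_2(M)\hookrightarrow\pi_2(Y)\cong H_2(Y)$, but this is only a special case. Your local claim that the orbits near an intersection are ``lens-space-free'' is false in general. The coarse space there is $\mathbb C^2/\Gamma_x$, typically a genuine cyclic quotient singularity with group $\Gamma_x/\langle\Gamma_i,\Gamma_j\rangle$. Isolated orbifold points are not treated at all.

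\textbf{What is missing} is a torsion computation that works directly with the locally free $T^2$-action. The paper proceeds as follows.
\begin{enumerate}
\item Coprimality is used only to make every stabilizer cyclic. A non-cyclic $p$-primary part of $\Gamma_x$ would meet both $\ker\chi_1$ and $\ker\chi_2$, forcing $p\mid m_i$ and $p\mid m_j$. At isolated points each character is faithful.
\item Pass to the Borel construction $\bar B=ET^2\times_{T^2}M$. There $0\to H_2(M)\to H_2(\bar B)\to\mathbb Z^2\to 0$ gives $\Tor H_2(M)\cong\Tor H_2(\bar B)$.
\item Use the spectral sequence filtered by cells of the coarse quotient. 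Cyclicity gives $E^2_{0,2}=0$, and with injectivity of $\Gamma_i\oplus\Gamma_j\to\Gamma_x$ it gives $E^2_{1,1}\cong\bigoplus_i H_1(D_i;\mathbb Z/m_i)$.
\item $H_3$ of the coarse space vanishes, so $E^2_{1,1}$ survives to $E^\infty$.
\item $E^\infty_{2,0}$ is torsion-free. This follows by excising cone neighbourhoods of the singular points (Thom isomorphism) and using that $H_2$ of the complement is free.
\end{enumerate}
Genus zero then kills the torsion, and Lemma \ref{lem:torsionfree} gives torsion-freeness of all of $H_*(M)$.

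\textbf{The remaining steps.} Once torsion-freeness is in hand, your degree argument for the trilinear form and for $p_1$ is valid. It is somewhat more direct than the paper's Euler-class argument, given that $H^2(M;\mathbb Q)$ is pulled back from $H^2_{\mathrm{orb}}(X;\mathbb Q)$.

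Spin cannot be left open, since a non-spin $M$ would contradict the statement. It is immediate from Lemma \ref{l:spin}: $H_1(M)=0$ makes $H^2(M)$ torsion-free, and the BHE condition gives $c_1(M;\mathbb R)=0$, so $w_2\equiv c_1=0$.

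Identifying $X$ via \cite[Lemma 2.3]{apostolov2026pluriclosed} is unnecessary, and that lemma concerns smooth quotients anyway. Here $r$ is just $b_2(M)+1$, and $b_3=2r$ follows from $\chi(M)=0$.
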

\noindent Again the key point is to show that $H_2(M)$ is torsion-free, hence we want to find an extension of the computation \cite{kollar2006circle} of the torsion of Seifert fibrations over complex orbifolds used in Theorem \ref{t:BHE3foldskollar}.  A key issue is to show that the orbifold is cyclic, a point which holds automatically for Seifert circle fibrations.  For BHE threefolds in general all stabilizers are a priori cyclic except possibly at divisor intersections, which here follows from the assumption that stabilizer subgroups of the divisors have coprime orders (cf. Lemma \ref{lem:cyclic-intersection-stabilizer}).  We note that this hypothesis holds in the known examples of \cite{apostolov2026pluriclosed}.  With this cyclicity in hand, we determine the torsion of $H_2(M)$ via a spectral sequence computation for an equivariant CW decomposition on $M$.

\begin{rmk} 
\begin{enumerate}
\item In general for non-K\"ahler BHE threefolds, when $k = 2$, the transverse space $N$ is isometric to a Bismut-flat metric on a standard Hopf surface, which splits a further $S^1$ factor, hence the original splitting was not maximal and in fact $k = 3$.  The cases $k \geq 4$ are easily shown to be Riemannian flat, hence K\"ahler, and ruled out by assumption.
\item In \cite{grantcharov2008calabi}, the authors constructed pluriclosed metrics, and distinct Hermitian (non-pluriclosed) metrics with vanishing Bismut Ricci form, on all of the manifolds $\#^{r-1} S^2 \times S^4 \#^r S^3 \times S^3$.  In fact these metrics are constructed via a bundle construction, and are invariant under a free $T^2$ action.  Theorem \ref{t:BHE3foldssmooth} shows that for $r > 8$ we cannot construct a BHE invariant under this action.  An interesting question then is to determine the long-time behavior of invariant pluriclosed flow \cite{garcia2026pluriclosed,PCF} in this setting.  Our results show that smooth convergence cannot be expected, and rather one may expect orbifold singularities to form at infinity.
\item The manifolds in case (1) of Theorem \ref{t:BHE3foldskollar} admit natural complex structures arising from contact structures on $\#^k S^2 \times S^3$.  However, we are not aware of a general construction of pluriclosed metrics on these spaces.  One naturally would like to impose furthermore that the $S^1$ factor splits isometrically.  Given this, constructing a pluriclosed metric naturally adapted to a contact structure on $\#^k S^2 \times S^3$ requires Chern class of the contact structure to square to zero, which in general will require orbifold singularities on the quotient.
\item A famous folklore conjecture of Yau asserts that there are only finitely many diffeomorphism types of K\"ahler Calabi-Yau threefolds (cf. e.g. \cite{wilson2021boundedness}).  In fact this is conjectured to hold in all dimensions.  As stated such a conjecture is obviously false for BHE even in dimension $2$ as evidenced by lens space quotients of the standard Hopf surface.   However we can ask a similar question for BHE with free abelian fundamental group, as every BHE is covered by such a space by Theorem \ref{t:splitting}.  In view of the recent GIT perspective on BHE threefolds \cite{apostolov2026pluriclosed}, it is plausible that there still exist infinitely many topologically distinct BHE with this restriction.  On the other hand Theorems \ref{t:BHE3foldskollar} and \ref{t:BHE3foldsrank0} give a simple numerological classification of the possible topologies, whereas there is a vast array of known K\"ahler Calabi-Yau threefolds.
\end{enumerate}
\end{rmk}

\noindent \textbf{Acknowledgments:} We thank Vestislav Apostolov and Beatrice Brienza for insightful conversations.  Aspects of the strategy for the proof of Theorem \ref{t:BHE3foldsrank0} were developed in consultation with ChatGPT.

\section{Topology of generalized Ricci solitons} \label{s:topologyGRS}
\subsection{Nontrivial third Betti number}

\begin{prop} \label{p:b3bound} Let $(M^n, g, H, f)$ be a compact generalized Ricci soliton.  Then $[H] = 0$ if and only if the soliton is trivial.  In particular nontrivial solitons have $b_3 \geq 1$.
\begin{proof}  The argument is implicit in \cite[Proposition 4.2]{ASU3}.  The given form $H$ is closed and satisfies $d^* (e^{-f} H) = 0$.  Thus if $H = db$ we have
\begin{align*}
    \int_M \brs{H}^2 e^{-f} dV_g =&\ \int_M \IP{ b, d^* (e^{-f} H)} dV_g = 0.
\end{align*}    
\end{proof}
\end{prop}

\subsection{GRS and Yamabe constants}

Taking inspriration from \cite{akutagawa2007perelman}, here we show using the Perelman-type $\gl$ quantity for generalized Ricci flow (\cite{OSW} cf. also \cite{GRFbook}), that the Yamabe invariant of a GRS is nonnegative.

\begin{lemma} \label{l:lambdasign} Let $(M^n, g, H, f)$ be a compact GRS.  Then
\begin{align*}
    \gl(g, H) = \tfrac{1}{6} \int_M \brs{H}^2 e^{-f} dV_g.
\end{align*}
In particular, $\gl(g, H) \geq 0$, with equality if and only if the soliton is trivial.
\begin{proof} Taking the trace of the GRS equation yields $R_g - \tfrac{1}{4} \brs{H}^2 + \gD f = 0$.  It follows using integration by parts
\begin{align*}
    \gl(g, H) =&\ \int_M \left( R_g - \tfrac{1}{12} \brs{H}^2 + \brs{\N f}^2 \right) e^{-f} dV_g\\
    =&\ \int_M \left( R_g - \tfrac{1}{12} \brs{H}^2 + \gD f \right) e^{-f} dV_g = \tfrac{1}{6} \int_M \brs{H}^2 e^{-f} dV_g,
\end{align*}
as claimed.  In the case of equality it follows immediately that $H = 0$, so that $(g, f)$ is a steady Ricci soliton, and the remaining claims are a standard fact.
\end{proof}
\end{lemma}

\begin{prop} \label{p:AroofGRS} Let $(M^n, g, H, f)$ be a compact GRS.  Then $Y_{[g]} \geq 0$, with equality if and only if the soliton is trivial.
\begin{proof} Choose a Yamabe minimizer $u$.  Then we obtain
\begin{align*}
    \brs{\brs{u}}_{L^{2n/n-2}}^2 Y_{[g]} =&\ \int_M \left( R_g u^2 + 4 \frac{n-1}{n-2} \brs{\N u}^2 \right)\\
    \geq&\ \int_M \left( R_g u^2 + 4 \brs{\N u}^2 \right)\\
    =&\ \mathcal F(g, H, -2 \ln u)\\
    \geq&\ \gl(g, H) \brs{\brs{u}}_{L^2}^2.
\end{align*}
Both claims now follow immediately from Lemma \ref{l:lambdasign}.
\end{proof}
\end{prop}

\subsection{Nonnegative Bakry-\'Emery Ricci and splitting}

In \cite{wei2009comparison} the classic Cheeger-Gromoll splitting theorem was extended to the case of Bakry-\'Emery Ricci curvature with bounded potential (cf. also \cite{lichnerowicz1970varietes}).  As an elementary corollary we show that this splitting naturally decomposes generalized Ricci solitons:

\begin{thm} \label{t:splitting} If $(M, g, H, f)$ is a compact generalized Ricci soliton, then $M$ is finitely covered by $N \times T^k$, where:
\begin{enumerate}
    \item $\pi_1(N) = 0$,
    \item $g = g_N + g_F$ where $g_F$ is a flat metric on $T^k$,
    \item $H = \pi_N^* H_N$ for a closed three-form on $H_N$,
    \item $f = \pi_N^* f_N$ for $f \in C^{\infty}(N)$.
\end{enumerate}
Hence, $(g_N, H_N, f_N)$ is a generalized Ricci soliton.
\begin{proof} Since a generalized Ricci soliton has nonnegative Bakry-\'Emery Ricci curvature, all of the claims except item (3) follow immediately from \cite[Theorem 6.6]{wei2009comparison}.  Given this structure, inspecting the generalized Ricci soliton equation it follows that for any Killing field $X$ tangent to the torus we have $i_X H = 0$.  Using \cite[Proposition 2.8]{kopfer2023bochner} it follows that $H$ is Levi-Civita parallel with respect to $X$, and hence item (3) follows.
\end{proof}
\end{thm}

\begin{rmk} Further structure follows immediately from [\cite{wei2009comparison} Corollary 5.7] \label{c:GRSfundamentalgroup}. In particular, compact generalized Ricci solitons satisfy
\begin{enumerate}
    \item $b_1 \leq n$.
    \item $\pi_1(M)$ has a finite index free abelian subgroup of rank $\leq n$.
    \item $b_1 = n$ or $\pi_1(M)$ has a free abelian subgroup of rank $n$ if and only if $(M, g)$ is a flat torus and $f$ is constant.
\end{enumerate}
\end{rmk}

\begin{proof}[Proof of Theorem \ref{t:lowdGRStopology}]
To show item (1), we first claim that the fundamental group is finite.  If it were infinite, then a finite cover of $M$ admits a splitting by Theorem \ref{t:splitting}.  Moreover by Theorem \ref{t:splitting} item (3) it follows that the soliton is trivial, a contradiction.  Hence the fundamental group is finite, hence $M$ is a space form by the Poincar\`e Conjecture.  

For item (2) we first show that $\brs{\pi_1(M)} = \infty$.  If not, by Hurewicz we know $b_1 = 0$, hence $b_3 = 0$ by Poincar\'e duality, which is impossible by Proposition \ref{p:b3bound}.  Hence there is a nontrivial torus factor $T^k$ in the cover guaranteed by Theorem \ref{t:splitting}.  If $k = 2$, then by item (3) $H$ is pulled back from a two-dimensional manifold $N$ and hence vanishes, rendering the soliton trivial.  Similarly the cases $k = 3,4$ are ruled out.  Hence only the case $k = 1$ remains, and the transverse data on $N$ is a three-dimensional nontrivial GRS, hence $N \cong S^3 / \Gamma$ by part (1).  Hence $M$ is a finite quotient of $S^3 \times S^1$ as claimed.
\end{proof}

\begin{rmk} There exists a nontrivial family of GRS on $S^3$ (cf. \cite[Corollary 1.4]{Streetssolitons}).  Theorem \ref{t:lowdGRStopology} lends some credence to the conjecture that these are the only such solitons, and that their products with $S^1$ are the only four-dimensional GRS.
\end{rmk}

\section{Topology of string backgrounds}

Here we turn to topological results on string backgrounds, defined generically as Riemannian manifolds equipped with a closed three-form such that the associated Bismut connection has reduced holonomy (cf. \cite{Strominger1986}).  We will focus on the cases of Bismut-Hermitian-Einstein, strong torsion $G_2$, and strong torsion $\Spin(7)$-manifolds.  To keep this paper brief we refer the reader to the recent unified discussion of these geometries \cite{kennon2025canonical} for relevant background.

\subsection{Bismut-Hermitian-Einstein manifolds}

\begin{prop} \label{p:BHEtopology} Let $(M^{2n}, g, J)$ be a nontrivial compact BHE manifold.  Then
\begin{enumerate}
    \item $b_3 \geq 1$,
    \item $Y_{[g]} > 0$,
    \item $\chi = 0$,
    \item $h^{n-1,0} = h^{n,0} = 0$,
    \item $h_T^{n-2,0} = h_T^{n-1,0} = 0$.
\end{enumerate}
\begin{proof}
  As BHE are generalized Ricci solitons \cite[Proposition 2.6]{JFS}, items (1) and (2) follow from Propositions \ref{p:b3bound} and \ref{p:AroofGRS} respectively.  To show item (3) first recall that nontrivial compact BHE admit a canonical Bismut-parallel distribution $\{V, JV \}$ \cite[Proposition 2.6]{JFS}.  Hence there exists a nowhere vanishing vector field, hence $\chi = 0$.  The first claim of item (4) follows from a Bochner argument detailed in \cite[Proposition 8.3]{PCFReg}.  An inspection of the argument shows that it also implies $h^{n,0} = 0$.  For item (5), we claim that the Bochner argument used for item (4) can be repeated for transverse holomorphic forms.  By the computations of \cite[Lemma 6.6]{garcia2026pluriclosed} it follows that the second Chern Ricci curvature of the transverse Hermitian metric satisfies the same equation as the total space BHE metric, up to a bundle curvature term $F^2$ which has a favorable sign.  Hence the arguments for item (4) carry through to prove item (5).
\end{proof}
\end{prop}

\subsection{Strong torsion \texorpdfstring{$G_2$}{G2}-manifolds}

We observe rough constraints on the topology of strong torsion $G_2$ manifolds coming from the discussion of \S \ref{s:topologyGRS}.  Note that item (3) of the proposition below generalizes \cite[Corollary 3.5]{fino2025some}, which shows vanishing of $\Aroof$ under further hypotheses on the torsion.

\begin{prop} \label{p:strongG2topology} Let $(M^7, \varphi)$ be a nontrivial compact strong torsion $G_2$ manifold.  Then
\begin{enumerate}
    \item $b_3 \geq 1$,
    \item $Y_{[g_{\varphi}]} > 0$,
    \item $\Aroof = 0$.
\end{enumerate}
\begin{proof} Since strong torsion $G_2$ manifolds define generalized Ricci solitons \cite{ivanov2023riemannianG2}, the claims (1) and (2) follow from Proposition \ref{p:b3bound} and \ref{p:AroofGRS}.  Since manifolds with a $G_2$ structure are automatically spin, item (3) follows.
\end{proof}
\end{prop}

\subsection{Strong torsion \texorpdfstring{$\Spin(7)$}{Spin(7)}-manifolds}

We turn to the case of strong torsion $\Spin(7)$-manifolds.  As above we establish vanishing of $\Aroof$ which allows us to adapt \cite[Theorem 8.2]{ivanov2004connections} to yield vanishing of several invariants.

\begin{prop} Let $(M^8, \Phi)$ be a nontrivial compact strong torsion $\Spin(7)$-manifold.  Then
\begin{enumerate}
    \item $b_3 \geq 1$,
    \item $Y_{[g_{\Phi}]} > 0$,
    \item $\Aroof = 0$,
    \item $\chi = \tau = p_1^2 = p_2 = 0$.
\end{enumerate}
\begin{proof} Compact strong torsion $\Spin(7)$-manifolds are generalized Ricci solitons \cite{ivanov2023riemannianspin7}, hence items (1) and (2) follow from Propositions \ref{p:b3bound} and \ref{p:AroofGRS}.  As manifolds with $\Spin(7)$ structures are automatically spin, item (3) follows.  Compact nontrivial strong torsion $\Spin(7)$-manifolds admit a canonical Bismut parallel vector field by \cite[Theorem 6.4]{ivanov2023riemannianspin7}.  This vector field is in particular nowhere vanishing, hence $\chi = 0$. 

Using the Hirzebruch signature theorem (cf. \cite{joyce2000compact}) we obtain 
\begin{align*}
    45 (b_4^+ - b_4^-) = 45 \tau = 7 p_2 - p_1^2, \qquad 45 \cdot 2^7 \Aroof = 7 p_1^2 - 4 p_2.
\end{align*}
On the other hand the existence of a $\Spin(7)$ structure always implies
\begin{align*}
    p_1^2 - 4 p_2 + 8 \chi = 0.
\end{align*}
As we have shown that $\chi = \Aroof  = 0$, it follows that $p_1^2 = p_2 = 0$, hence also $\tau = 0$, finishing item (4).
\end{proof}
\end{prop}

\begin{rmk} \label{r:isolatedmoudli} The results above confirm a general principle regarding string backgrounds and their classical special holonomy counterparts, namely that their moduli spaces are distinct, and in many cases cannot even exist on the same manifolds.  From \cite[Proposition 3.5]{Streetssolitons} we know that K\"ahler manifolds cannot support a non-K\"ahler BHE metric, hence the topological backgrounds are disjoint.  Similarly, as manifolds with  $\Spin(7)$-holonomy have $\Aroof = 1$, so again the topological backgrounds are disjoint from strong torsion $\Spin(7)$ manifods.  We cannot rule out the possibility that $G_2$ holonomy metrics can coexist on the same manifold as a strong torsion $G_2$ structure.  However, the stability result \cite[Theorem 1.2]{dai2005stability} says in particular that special holonomy metrics on a simply connected manifold cannot admit a nearby metric with positive scalar curvature.  It hence follows that on a given manifold the moduli space of $G_2$ holonomy metrics is disjoint from the moduli of strong torsion $G_2$ structures.
\end{rmk}

\section{Diffeomorphism Classification of BHE threefolds}

Here we turn to a more refined discussion of the topology of BHE threefolds, leading to the proofs of Theorems \ref{t:BHE3foldssmooth}, \ref{t:BHE3foldskollar}, and \ref{t:BHE3foldsrank0}.  We note that all homology and cohomology spaces below are with integer coefficients unless otherwise specified.

\subsection{Topological lemmas}

\begin{lemma} \label{l:spin} Let $(M^{2n},g, J)$ be a compact BHE with $\pi_1(M)$ free abelian.  Then $M$ is spin.

\begin{proof}
    Since $\pi_1(M)$ is free abelian it follows that $H_1(M)$ is torsion-free, thus the universal coefficient theorem gives $H^2(M) \cong
\operatorname{Hom}\!\left(H_2(M),\mathbb Z\right)$, hence $H^2(M)$ is torsion-free.  Since $M$ is BHE we have $c_1(M; \mathbb R) = 0$, hence $c_1(M; \mathbb Z)=0$.  Since $M$ is complex, $w_2(M)\equiv c_1(M)$ mod $2$, thus $w_2(M) = 0$.
\end{proof}
\end{lemma}

\begin{lemma}\label{lem:torsionfree}
Let $M$ be a compact $6$-manifold such that $H_1(M)$ and $H_2(M)$ are
torsion-free. Then $H_*(M)$ is torsion-free.
\end{lemma}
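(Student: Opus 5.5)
The plan is to use Poincaré duality together with the universal coefficient theorem. I will take $M$ to be closed and orientable; this is the setting in which the lemma is applied, since $M$ is complex or a spin quotient. If orientability were not given, I would first pass to the orientation double cover. Throughout, write $T_k$ for the torsion subgroup of $H_k(M)$.

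The key input is the standard torsion linking consequence of Poincaré duality and the universal coefficient theorem. By the universal coefficient theorem,
\begin{align*}
\Tor H^k(M) \cong \Tor H_{k-1}(M).
\end{align*}
By Poincaré duality, $H_{n-k}(M) \cong H^k(M)$. Combining these with $n = 6$ gives
\begin{align*}
T_{6-k} \cong T_{k-1}.
\end{align*}

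I then check each degree in turn.
\begin{enumerate}
\item Degrees $0$ and $6$: $H_0 \cong \mathbb Z$ and $H_6 \cong \mathbb Z$ are torsion-free, the latter because $M$ is closed and orientable.
\item Degree $5$: take $k=1$, which gives $T_5 \cong T_0 = 0$.
\item Degree $4$: take $k=2$, which gives $T_4 \cong T_1$, and $T_1 = 0$ by hypothesis.
\item Degree $3$: take $k=3$, which gives $T_3 \cong T_2$, and $T_2 = 0$ by hypothesis.
\item Degrees $1$ and $2$: these are torsion-free by assumption.
\end{enumerate}
Hence every $H_k(M)$ is torsion-free.

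The only real subtlety is the orientability caveat. In our applications $M$ is complex, or is $N \times T^k$ with $N$ simply connected, so $M$ is orientable. I will state this assumption explicitly, noting that the lemma is only used for orientable $M$. Apart from that, there is no obstacle; the proof is bookkeeping of the isomorphisms $T_{6-k}\cong T_{k-1}$.
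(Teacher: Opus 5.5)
Your argument is correct and is essentially the paper's: both combine the universal coefficient theorem with Poincar\'e duality to get $\Tor H_i(M) \cong \Tor H_{5-i}(M)$, which rules out torsion in degrees $3$ and $4$. The only minor difference is in degree $5$, where the paper uses $H_5(M) \cong \operatorname{Hom}(H_1(M),\mathbb Z)$ instead of your $T_5 \cong T_0$; one correction to your aside is that orientability cannot be removed by passing to the orientation double cover, because the mapping torus of a reflection of $S^5$ has $H_1 \cong \mathbb Z$ and $H_2 = 0$ but $H_5 \cong \mathbb Z/2$, so assuming orientability explicitly (as the paper does implicitly) is genuinely necessary.
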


\begin{proof}
The universal coefficient theorem implies that
\[
\operatorname{Tor}H^k(M)
\cong
\operatorname{Ext}\!\left(H_{k-1}(M),\mathbb Z\right)
\cong
\operatorname{Tor}H_{k-1}(M).
\]
Taking $k=6-i$ and using Poincar\'e duality we obtain
\[
\operatorname{Tor}H_i(M)
\cong
\operatorname{Tor}H^{6-i}(M)
\cong
\operatorname{Tor}H_{5-i}(M).
\]
Thus since $H_1(M)$ and $H_2(M)$ are torsion-free, so are $H_4(M)$ and $H_3(M)$.  Furthermore, Poincar\'e duality and the universal coefficient theorem give
\[
H_5(M)
\cong
H^1(M)
\cong
\operatorname{Hom}\!\left(H_1(M),\mathbb Z\right),
\]
so $H_5(M)$ is free abelian.  Since of course $H_0(M)\cong H_6(M)\cong \mathbb Z$, the lemma follows.
\end{proof}

\begin{lemma}\label{lem:p1-vanishing}
Let \(M\) be a compact smooth oriented six-manifold equipped with a locally
free \(T^2\)-action, and suppose one of the two circle Euler
classes is nonzero.  Then $p_1(TM; \R) = 0$.
\end{lemma}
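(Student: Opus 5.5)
We work with real coefficients throughout, so torsion plays no role. We pass to the Borel model $M_{T} = ET^2 \times_{T^2} M$. Because the $T^2$-action is locally free, the projection $M_T \to M/T^2 =: X$ onto the 4-dimensional orbifold quotient induces an isomorphism in rational cohomology, $H^*_{T}(M;\R) \cong H^*(X;\R)$, and this vanishes in degrees above $4$. The splitting of $TM$ is the key structural input. Equivariantly we have
\[ TM \cong \pi^* TX \oplus \underline{\R}^2, \]
where the trivial rank-two summand is spanned by the infinitesimal generators $X_1, X_2$ of the action. These generators are everywhere linearly independent precisely because the action is locally free. The quotient $TM/\langle X_1,X_2\rangle$ is the pullback of the orbifold tangent bundle $TX$.

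Taking Pontryagin classes gives $p_1(TM;\R) = \pi^* p_1(TX;\R)$, where $p_1(TX)\in H^4(X;\R)$ is an orbifold (Chern--Weil) class and $\pi\colon M \to X$ is the orbifold projection. To verify this equality at the level of forms, I would choose a $T^2$-invariant metric and a connection form $\theta = (\theta_1,\theta_2)$ for the orbifold principal bundle. I would then compute the curvature of the induced connection on the horizontal bundle. That curvature is basic, so $p_1$ is represented by a basic form pulled back from $X$.

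It therefore suffices to show that $\pi^*\colon H^4(X;\R) \to H^4(M;\R)$ is the zero map. Here $H^4(X;\R)\cong \R$, generated by the orbifold fundamental class dual, i.e. the volume form. By hypothesis one of the Euler classes, say $e_1 = [d\theta_1]\in H^2(X;\R)$, is nonzero. Poincaré duality on the orbifold $X$, which holds rationally, then supplies a class $\beta \in H^2(X;\R)$ with $e_1 \cup \beta \neq 0$ in $H^4(X;\R)$. Consequently every degree-four class can be written as $c\, e_1\wedge \beta$.

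Now $\pi^* e_1 = [d\theta_1]$ is exact on $M$, because $\theta_1$ is a globally defined 1-form upstairs. Hence
\[ \pi^*(e_1\wedge\beta) = d(\theta_1 \wedge \pi^*\beta) \]
is exact, and so $\pi^* H^4(X;\R) = 0$, giving $p_1(TM;\R) = 0$. An equivalent route is the Gysin sequence for the circle bundle determined by $\theta_1$ over the intermediate quotient $M/S^1_2$. There, multiplication by $e_1$ is surjective onto the top degree and its image lies in the kernel of the pullback.

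The main obstacle is the orbifold bookkeeping. One must justify that de Rham cohomology of basic forms computes $H^*(X;\R)$ and satisfies Poincaré duality, and that the curvature representative of $p_1$ is genuinely basic. To handle this, I would work entirely with $T^2$-basic forms on $M$, whose complex computes $H^*(X;\R)$ by the standard result for locally free actions. For duality, I would use the nondegeneracy of the pairing $(a,b)\mapsto \int_M a\wedge b\wedge\theta_1\wedge\theta_2$ on basic cohomology. This avoids orbifold technology altogether.
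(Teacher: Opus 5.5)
Your proposal is correct and follows the paper's proof essentially step for step. You trivialize the vertical bundle by the generators so that $p_1(TM;\R)=\pi^*p_1^{\mathrm{orb}}(TX;\R)$, use orbifold Poincar\'e duality to write every top-degree class as $c\,e_1\smile\beta$, and kill its pullback via $\pi^*(e_1\smile\beta)=[d(\theta_1\wedge\pi^*\beta)]$. (Your Gysin aside is slightly misstated: the relevant bundle is the circle orbibundle $M/S^1_2\to X$ determined by $\theta_1$, not a bundle over $M/S^1_2$; this does not affect the main argument.)
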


\begin{proof}
Let \(\mathcal V\subset TM\) be the vertical tangent bundle.  The two
infinitesimal generators of the \(T^2\)-action give a global framing of
\(\mathcal V\), which is hence trivial.  After choosing a \(T^2\)-invariant horizontal distribution \(\mathcal H\),
we have
\[
    TM=\mathcal V\oplus\mathcal H,
    \qquad
    \mathcal H\cong \pi^*TB,
\]
where \(TB\) denotes the orbifold tangent bundle of \(B\).  Therefore the
Whitney product formula gives
\begin{equation}\label{eq:p1-pullback}
    p_1(TM; \mathbb R)=\pi^*p_1^{\mathrm{orb}}(TB; \mathbb R).
\end{equation}

We now show that the pullback of every class in
\(H^4_{\mathrm{orb}}(B;\R)\) vanishes.  Since \(B\) is a compact connected
oriented four-orbifold, orbifold Poincar\'e duality gives a nondegenerate
pairing
\[
    H^2_{\mathrm{orb}}(B;\R)\times H^2_{\mathrm{orb}}(B;\R)
    \longrightarrow H^4_{\mathrm{orb}}(B;\R)\cong\R.
\]
Because \(e\neq 0\), there exists \(b\in H^2_{\mathrm{orb}}(B;\R)\) such
that
\[
    e\smile b\neq 0.
\]
Since \(H^4_{\mathrm{orb}}(B;\R)\) is one-dimensional, \(e\smile b\)
spans it.  Hence every \(a\in H^4_{\mathrm{orb}}(B;\R)\) is expressed $a=c\,e\smile b$ for some \(c\in\R\).

Let \(\eta\) be a connection one-form corresponding to the chosen circle
factor, and let \(F\) be its orbifold curvature form on \(B\).  Thus
\[
    d\eta=\pi^*F,
    \qquad
    [F]=e.
\]
Choose a closed orbifold two-form \(\beta\) representing \(b\).  Then
\[
\begin{aligned}
    \pi^*a
    &=c\,[\pi^*(F\wedge\beta)]\\
    &=c\,[d\eta\wedge\pi^*\beta]\\
    &=c\,[d(\eta\wedge\pi^*\beta)]\\
    &=0.
\end{aligned}
\]
Thus
\[
    \pi^*\colon H^4_{\mathrm{orb}}(B;\R)\longrightarrow H^4(M;\R)
\]
is the zero map, finishing the proof.
\end{proof}

\subsection{Regular case}

\begin{lemma}\label{lem:S1bundle}
Let $N$ be a closed, simply connected, spin $5$-manifold which is a principal $S^1$ bundle over $B^4$.  Assuming $H_2(B)\cong \mathbb Z^r$, then $r\geq 1$ and
\[
N\cong \#^{\,r-1}(S^2\times S^3).
\]
\end{lemma}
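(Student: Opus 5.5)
The plan is to apply the Smale--Barden classification of closed simply connected spin $5$-manifolds. Such a manifold is determined up to diffeomorphism by $H_2(N)$, and $N \cong \#^{s}(S^2\times S^3)$ exactly when $H_2(N)\cong\mathbb Z^s$ is torsion-free. So everything reduces to computing $H_2(N)$ from the circle bundle $\pi\colon N\to B$ with Euler class $e\in H^2(B)$.

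First I would pin down $B$. The homotopy long exact sequence of the fibration $S^1\to N\to B$ gives $\pi_1(S^1)\to\pi_1(N)=0\to\pi_1(B)\to\pi_0(S^1)=0$, so $\pi_1(B)=0$. Then $H_1(B)=0$, and by universal coefficients and Poincaré duality $H_2(B)\cong H^2(B)$ is free. Hence $H_2(B)\cong\mathbb Z^r$ and $H^2(B)\cong\mathbb Z^r$.

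Next I would run the Gysin sequence in cohomology:
\[
0\to H^1(N)\to H^0(B)\xrightarrow{\smile e} H^2(B)\xrightarrow{\pi^*} H^2(N)\to H^1(B)=0 .
\]
Since $H^1(N)=0$, the map $\mathbb Z\to H^2(B)$, $1\mapsto e$, is injective, so $e\neq 0$. This forces $r\ge 1$, and it gives $H^2(N)\cong \mathbb Z^r/\langle e\rangle$. Similarly, the sequence $H^3(N)\to H^2(B)\xrightarrow{\smile e}H^4(B)\to H^4(N)\to H^3(B)=0$ gives $H^4(N)\cong \mathbb Z/\langle e\smile H^2(B)\rangle$. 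By Poincaré duality, $H_1(N)=0$ and $H^4(N)\cong H_1(N)=0$, so the map $H^2(B)\to\mathbb Z$, $b\mapsto \langle e\smile b,[B]\rangle$, is surjective.

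The main step is deducing that $e$ is primitive. Under the unimodular intersection form on $H^2(B)$, surjectivity means the functional $Q(e,\cdot)$ is primitive in $\operatorname{Hom}(H^2(B),\mathbb Z)$. Write $e=m e'$ with $e'$ primitive; then $Q(e,\cdot)=mQ(e',\cdot)$, so surjectivity forces $m=\pm1$. Therefore $\langle e\rangle$ is a direct summand of $\mathbb Z^r$, and $H^2(N)\cong\mathbb Z^{r-1}$ is free. Since $H_1(N)=0$, the universal coefficient theorem gives $\operatorname{Tor}H_2(N)\cong\operatorname{Tor}H^3(N)\cong\operatorname{Tor}H_2(N)$ via Poincaré duality, which is circular, so instead I would compute directly. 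We have $\operatorname{Tor}H^2(N)\cong\operatorname{Ext}(H_1(N))=0$ and $H^2(N)\cong\operatorname{Hom}(H_2(N),\mathbb Z)$. Also $H_2(N)\cong H^3(N)$, and $\operatorname{Tor}H^3(N)\cong\operatorname{Tor}H_2(N)$.

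To break the circularity, I would use the exact sequence $0\to \operatorname{coker}(\smile e\colon H^0\to H^2) \to H^2(N)$ together with the homology Gysin sequence
\[
H_3(B)=0\to H_1(B)\ldots,\qquad H_3(B)\to H_1(B)\to H_2(N)\to H_2(B)\xrightarrow{\cap e} H_0(B)\to H_1(N)\to H_1(B),
\]
with $H_3(B)=H_1(B)=0$. This yields $H_2(N)\cong\ker(\cap e\colon \mathbb Z^r\to\mathbb Z)$, a subgroup of a free group, hence free of rank $r-1$. Note this is automatic and does not even need the primitivity argument. Finally, $N$ is spin by hypothesis, so Smale--Barden gives $N\cong\#^{r-1}(S^2\times S^3)$, with the case $r=1$ giving $S^5$.
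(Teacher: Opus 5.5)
Your argument is correct and is essentially the paper's proof. The paper obtains $H_2(N)\cong\pi_2(N)$ as the kernel of $A\mapsto\langle e,A\rangle$ on $\pi_2(B)\cong H_2(B)$ from the homotopy exact sequence and Hurewicz. You obtain the same kernel from the homology Gysin sequence using $H_1(B)=0$, and both arguments then finish with Smale--Barden.

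Your cohomological detour is unnecessary: the primitivity of $e$ via $H^4(N)=0$ and the circular universal-coefficient remark can be dropped. The homology sequence $H_2(B)\xrightarrow{\cap e}H_0(B)\to H_1(N)=0$ already shows that $\cap e$ is surjective, which gives $r\ge 1$ and a kernel of rank $r-1$.
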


\begin{proof}
The homotopy long exact sequence implies that $\pi_1(B)=0$.  Let $
e\in H^2(B)$ 
denote the Euler class of the circle bundle. The homotopy exact
sequence also gives
\[
0\longrightarrow \pi_2(N)
\longrightarrow \pi_2(B)
\xrightarrow{\delta}
\pi_1(S^1)\cong\mathbb Z
\longrightarrow 0.
\]
Using the Hurewicz identification $
\pi_2(B)\cong H_2(B)$, 
the boundary map is evaluation against the Euler class:
\[
\delta(A)=\langle e,A\rangle.
\]
In particular, $\delta$ is surjective, so $e$ is primitive and
$r\geq 1$. Its kernel is therefore free abelian of rank $r-1$.  Since $N$ is simply connected, the Hurewicz theorem gives
\[
H_2(N)\cong \pi_2(N)\cong\ker\delta\cong\mathbb Z^{r-1}.
\]
Thus $N$ is a closed, simply connected, spin $5$-manifold with
torsion-free second homology of rank $r-1$.  The lemma follows from the Smale--Barden
classification \cite{barden1965simply,smale1962structure}.
\end{proof}

\begin{lemma}\label{lem:T2bundle}
Let $M$ be a compact simply connected spin $6$-manifold which is a principal $T^2$ bundle over $B^4$.  Assuming 
$
H_2(B)\cong \mathbb Z^r$, then $r\geq 2$ and
\[
M\cong
\#^{\,r-2}(S^2\times S^4)
\#^{\,r-1}(S^3\times S^3).
\]
\end{lemma}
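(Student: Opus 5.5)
The plan is to compute $H_2(M)$ and show it is torsion-free of rank $r-2$. Then $H_*(M)$ is torsion-free by Lemma \ref{lem:torsionfree}, and we finish with the Wall--Jupp classification of simply connected spin $6$-manifolds \cite{jupp1973classification, wall1966classification}. The homotopy long exact sequence of $T^2 \to M \to B$ gives $\pi_1(B)=0$ and the exact sequence
\[
0 \longrightarrow \pi_2(M) \longrightarrow \pi_2(B) \xrightarrow{\ \delta\ } \pi_1(T^2)\cong \mathbb Z^2 \longrightarrow 0 ,
\]
where surjectivity of $\delta$ uses $\pi_1(M)=0$. As in Lemma \ref{lem:S1bundle}, after the Hurewicz identification $\pi_2(B)\cong H_2(B)\cong \mathbb Z^r$ the map $\delta$ is $A \mapsto (\langle e_1,A\rangle, \langle e_2, A\rangle)$, where $e_1,e_2$ are the Euler classes of the two circle factors. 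Surjectivity onto $\mathbb Z^2$ forces $r\geq 2$. The kernel of a surjection $\mathbb Z^r\to\mathbb Z^2$ splits off, so it is free of rank $r-2$, and Hurewicz gives $H_2(M)\cong \mathbb Z^{r-2}$.

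Next we compute the remaining invariants. By Lemma \ref{lem:torsionfree}, together with $H_1(M)=0$ and Poincaré duality, we have $H_3(M)$ free, $H_4(M)\cong \mathbb Z^{r-2}$, and $b_3 = 2b_2 + 2 - \chi(M)$. The Euler characteristic vanishes because $T^2$ acts freely, so $\chi(M)=\chi(T^2)\chi(B)=0$, and hence $b_3 = 2(r-2)+2 = 2(r-1)$. The Wall--Jupp invariants of a simply connected spin $6$-manifold with torsion-free homology are $b_3$, $H^2(M)$, the cubic form $\mu(x,y,z)=\langle xyz,[M]\rangle$ on $H^2(M)$, and the linear form $p_1$ (with $w_2=0$). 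I will show both forms vanish.

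\begin{itemize}
\item For $p_1$: $H^4(M)$ is torsion-free, so it suffices to work over $\R$. Some Euler class $e_i$ is nonzero, since $\delta\neq 0$, so Lemma \ref{lem:p1-vanishing} (here $B$ is a smooth manifold) gives $p_1(TM;\R)=0$.
\item For the cup product: the Gysin/Serre spectral sequence shows $\pi^*:H^2(B)\to H^2(M)$ is surjective with kernel spanned by $e_1,e_2$, which matches the ranks $r-2$. Any class in $H^6(M)$ formed as a product of three pulled-back classes is $\pi^*$ of a $6$-class on the $4$-manifold $B$, hence zero. So $\mu=0$. In fact $\pi^*H^4(B)=0$ over $\R$ by the argument of Lemma \ref{lem:p1-vanishing}, so even all products $x\smile y$ vanish.
\end{itemize}

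Wall's realization then identifies $M$ with the connected sum carrying the same invariants, namely $b_2 = r-2$, $b_3 = 2(r-1)$, trivial cubic form and $p_1=0$. This is $\#^{r-2}(S^2\times S^4)\#^{r-1}(S^3\times S^3)$, since $S^2\times S^4$ has $b_2=1$, vanishing $x^3$ and $p_1=0$.

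The main obstacle I expect is the surjectivity of $\pi^*$ on $H^2$ and its compatibility with integral structure. We need $H^2(M)$ to be generated by pullbacks for the vanishing of $\mu$ integrally. Since $H^*(M)$ is torsion-free, vanishing over $\R$ suffices for the trilinear form, so I would verify surjectivity rationally via the Leray--Serre spectral sequence: $E_2^{0,1}=H^1(T^2)$ transgresses isomorphically onto $\mathrm{span}(e_1,e_2)$, and $E_2^{1,1}=0$, so $H^2(M;\mathbb Q)=H^2(B;\mathbb Q)/\langle e_1,e_2\rangle$.
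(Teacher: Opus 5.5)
Your proposal is correct and follows essentially the same route as the paper. The steps match: the homotopy exact sequence gives $H_2(M)\cong\mathbb Z^{r-2}$, Lemma \ref{lem:torsionfree} together with $\chi(M)=0$ gives $b_3=2r-2$, Lemma \ref{lem:p1-vanishing} gives $p_1=0$, products vanish via $\pi^*$, and Wall--Jupp finishes.

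The differences are minor. The paper gets integral surjectivity of $\pi^*$ on $H^2$ by dualizing the split sequence $0\to H_2(M)\to H_2(B)\to\mathbb Z^2\to 0$ that you already derived, which removes the obstacle you anticipated without any Leray--Serre argument. It then kills $\pi^*H^4(B)$ integrally using unimodularity of the intersection form on $B$. Your degree argument (a product of three pulled-back classes is a pullback of a $6$-class on $B$) already gives the vanishing of the cubic form, which is the invariant Wall actually uses, so that stronger step is not needed.
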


\begin{proof}
The homotopy exact sequence of the bundle contains
\[
\pi_1(M)\longrightarrow \pi_1(B)\longrightarrow \pi_0(T^2).
\]
Since $M$ is simply connected and $T^2$ is connected, it follows that
$\pi_1(B)=0$.  Choose an identification $T^2=S^1\times S^1$ and denote the Euler classes by $e_1,e_2\in H^2(B)$.  The homotopy exact sequence also gives
\[
0\longrightarrow \pi_2(M)
\longrightarrow \pi_2(B)
\xrightarrow{\delta}
\pi_1(T^2)\cong\mathbb Z^2
\longrightarrow 0,
\]
where, using the Hurewicz identification $\pi_2(B)\cong H_2(B)$, 
the boundary map is
\[
\delta(A)
=
\bigl(
\langle e_1,A\rangle,
\langle e_2,A\rangle
\bigr).
\]
In particular, $\delta$ is surjective. Hence $r\geq 2$, and its kernel
is free abelian of rank $r-2$. Since $M$ is simply connected, the
Hurewicz theorem gives
\[
H_2(M)\cong \pi_2(M)\cong\ker\delta\cong\mathbb Z^{r-2}.
\]
By Lemma \ref{lem:torsionfree}, the homology of $M$ is torsion-free.

By Poincar\'e duality we have $b_4(M)=b_2(M)=r-2$.  Since further $\chi(M) = 0$ and using $b_1(M)=b_5(M)=0$, we obtain
\[
0
=
\chi(M)
=
2+2b_2(M)-b_3(M),
\]
and therefore $b_3(M)=2r-2$.

We next show that the cup product vanishes on $H^2(M)$. Dualizing the short exact sequence above and using the universal coefficient theorem, we obtain
\[
0 \longrightarrow \mathbb{Z}^2
\xrightarrow{\delta^*} H^2(B)
\xrightarrow{\pi^*} H^2(M)
\longrightarrow 0.
\]
Since $\delta^*(1,0)=e_1$ and $\delta^*(0,1)=e_2$, it follows that
\[
H^2(M) \cong H^2(B)/\langle e_1,e_2\rangle.
\]
In particular, every class in $H^2(M)$ is the pullback of a class in $H^2(B)$.

The surjectivity of $\delta$ implies that $e_1$ is primitive. Since
the intersection form of the closed simply connected $4$-manifold
$B$ is unimodular, there exists $\alpha\in H^2(B)$ such that
\[
\langle e_1\smile\alpha,[B]\rangle=1.
\]
Thus $e_1\smile\alpha$ generates $H^4(B)$. Since 
$\pi^*e_1=0,$ we have $\pi^*(e_1\smile\alpha)=0$, thus $
\pi^*\colon H^4(B)\longrightarrow H^4(M)$
is the zero map. Consequently, for all $\beta,\gamma\in H^2(B)$,
\[
\pi^*\beta\smile\pi^*\gamma
=
\pi^*(\beta\smile\gamma)
=
0.
\]
Thus the cup product vanishes identically on $H^2(M)$.

We have shown that $M$ is simply connected and spin, has torsion-free
homology,
\[
H_2(M)\cong\mathbb Z^{r-2},
\qquad
H_3(M)\cong\mathbb Z^{2r-2},
\]
and has trivial cup product on $H^2(M)$.  It further follows from Lemma \ref{lem:p1-vanishing} that $p_1(M)=0$.
By the Wall--Jupp classification \cite{jupp1973classification,wall1966classification}, these invariants determine $M$ up
to diffeomorphism and agree with those of $
\#^{\,r-2}(S^2\times S^4)
\#^{\,r-1}(S^3\times S^3)$, finishing the proof.
\end{proof}

\begin{lemma} \label{l:rank1structure} Let $(M, g, J)$ be a compact BHE threefold such that the finite cover guaranteed by Theorem \ref{t:splitting} has $k = 1$.  Then $V$ generates the split $S^1$ factor.
\begin{proof} Let $Z$ denote the canonical vector field for the isometric $S^1$ action determined by the splitting on the canonical finite cover.  Consider the splitting $Z = Z^{\mathcal V} + Z^{\mathcal H}$, and first assume that $Z^{\mathcal H} \neq 0$.  Noting that $\N Z^{\mathcal H} \equiv \N J Z^{\mathcal H} \equiv 0$, it follows that the Bismut holonomy must be trivial, in which case it follows from the classification of Bismut flat manifolds that the canonical finite cover is isometric to a product of a Bismut flat Hopf surface and a flat torus, hence in fact $k = 3$.  Thus $Z^\mathcal H = 0$.  Thus $Z \in \mathrm{span} \{V, JV \}$, and using that it generates a trivial $S^1$ action, it follows from the Hermitian-Einstein equation for $JV$ that $\IP{Z, JV} = 0$.  Thus $Z$ is a multiple of $V$, as claimed.
\end{proof}
\end{lemma}

\begin{proof}[Proof of Theorem \ref{t:BHE3foldssmooth}] 
As BHE are generalized Ricci solitons, without loss of generality we consider the cover guaranteed by Theorem \ref{t:splitting}.  In case $k \geq 2$ it follows from Theorem \ref{t:lowdGRStopology} that $N \cong S^3 \times S^1$, so that in fact $k = 3$ and $N \cong S^3$ as claimed.

In case $k \leq 1$, due to the assumption that the canonical action is principal, we obtain a smooth complex surface as its quotient.  In \cite[Lemma 2.3]{apostolov2026pluriclosed} a characterization of the possibilities was shown using the Kodaira classification.  One possibility is a minimal ruled surface over an elliptic curve, however these have $h^{1,0} = 1$, and are thus ruled out by Proposition \ref{p:BHEtopology}.  The remaining possibilities are $\mathbb P^1 \times \mathbb P^1$ or a Hirzebruch surface blown up at at most $8$ points.  Moreover, it is shown in \cite[Lemma 2.3]{apostolov2026pluriclosed} that $c_1(B)^2 \geq 0$ with equality if and only if $V$ generates a product action.  

Let us now suppose that $k = 1$.  It follows from Lemma \ref{l:rank1structure} that $V$ generates a product action, hence $c_1(B)^2 = 0$, and it follows that $B$ is a blowup of either $\mathbb P^1 \times \mathbb P^1$ or a Hirzebruch surface at $8$ points.  In particular $b_2(B) = 10$, and the claim follows from Lemma \ref{lem:S1bundle}.

In case $k = 0$, $V$ cannot generate a product action, thus $c_1^2(B) > 0$.  Hence $B$ is a blowup of either $\mathbb P^1 \times \mathbb P^1$ or a Hirzebruch surface at between $0$ and $7$ points.  In particular $2 \leq b_2(B) \leq 9$, and the claim follows from Lemma \ref{lem:T2bundle}.
\end{proof}

\subsection{Quasiregular case}

We turn now to the quasiregular case, and the proofs of Theorems \ref{t:BHE3foldskollar} and \ref{t:BHE3foldsrank0}.  It turns out we already have the necessary tools to prove Theorem \ref{t:BHE3foldskollar}, after a further key input from \cite{kollar2006circle}:

\begin{proof}[Proof of Theorem \ref{t:BHE3foldskollar}] Since $\brs{\pi_1(M)} = \infty$, the canonical cover $N \times T^k$ guaranteed by Theorem \ref{t:splitting} satisfies $k \geq 1$.  If $k \geq 2$ then Theorem \ref{t:lowdGRStopology} implies that in fact $k = 3$ and $N \cong S^3$, yielding item (2).  Thus we now assume $k = 1$.  By Lemma \ref{l:rank1structure}, we know that $V$ generates the split $S^1$ direction, so that $N$ is a simply connected Seifert fibration over a complex orbifold $(B, \Delta)$, where
\[
\Delta
=
\sum_i
\Bigl(1-\frac1{m_i}\Bigr)D_i.
\]
By \cite[Proposition 28]{kollar2006circle},
\[
\operatorname{Tor} H_2(M)
\cong
\bigoplus_i (\mathbb Z/m_i)^{2g(D_i)}.
\]
Since every orbifold divisor \(D_i\) has genus zero, each exponent
\(2g(D_i)\) is zero, hence $H_2(N)$ is torsion-free.  It further follows from Lemma \ref{l:spin} that $N$ is spin, hence the result follows from the Smale-Barden classification \cite{barden1965simply,smale1962structure}.
\end{proof}

In view of this proof and Lemma \ref{lem:torsionfree} the main task in proving Theorem \ref{t:BHE3foldsrank0} is then to provide a generalization of Kollar's torsion computation to the more general setting of a locally free $T^2$ action with complex orbifold quotient.  In full generality such a characterization of the torsion seems challenging, in part due to the fact that the stabilizer subgroups need not always be cyclic, a fact which is automatic in the Seifert fibration case and exploited in \cite{kollar2006circle}.  We will show that for BHE threefolds the stabilizers at generic divisorial points are always cyclic, and cyclic at intersection points under a natural coprimality condition.  This allows for a decisive extension of \cite[Proposition 28]{kollar2006circle}.

\subsubsection{Analysis of stabilizers}

Here we analyze the stabilizer subgroups for $T^2$ actions on six-manifolds.  We first fix notation and  basic observations used throughout this discussion.  For notational ease let $T = T^2$, and let 
\begin{align*}
    \Gamma_x = \operatorname{Stab}_T(x),
\end{align*}
By assumption $\Gamma_x$ is a finite abelian group.  The slice theorem yields the existence of a neighborhood of the orbit $T \cdot x$ equivariantly diffeomorphic to $T \times_{\Gamma_x} V$, where $V$ is the slice representation.  As our actions are isometric for a Riemannian metric and effective, the representation of $\Gamma_x$ on $V$ is always faithful.  Moreover, as the action is holomorphic, $V \cong \mathbb C^2$ and $\Gamma_x$ acts via elements of $\mathrm{U}(2)$.
The orbifold locus thus consists of divisors $D_i$ and points $p_i$.  Our first main goal is to give a simple criteria to show that the resulting orbifold is cyclic.

\begin{lemma}
\label{lem:codim-two-isotropy-cyclic}
If $
\operatorname{codim}_{\mathbb R}V^{\Gamma_x}=2$, then \(\Gamma_x\) is cyclic.
\end{lemma}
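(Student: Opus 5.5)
The plan is to use faithfulness of the slice representation together with the fact that $\Gamma_x$ fixes the whole real-codimension-two subspace $V^{\Gamma_x}$. First, since $\Gamma_x$ is finite we average a Hermitian inner product on $V \cong \mathbb C^2$ so that $\Gamma_x \subset \mathrm{U}(2)$ acts unitarily. The representation is complex linear, so the fixed subspace $W = V^{\Gamma_x}$ is a complex subspace. The hypothesis $\operatorname{codim}_{\mathbb R} W = 2$ then says $W$ is a complex line. Let $L = W^{\perp}$ be its Hermitian orthogonal complement, which is also a complex line. Because the action is unitary and preserves $W$, it preserves $L$ as well. Hence $V = W \oplus L$ splits as a sum of $\Gamma_x$-invariant lines, with $\Gamma_x$ acting trivially on $W$.

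Next we restrict to $L$. Each $\gamma \in \Gamma_x$ acts on $L$ by a unitary scalar, so we get a homomorphism $\rho : \Gamma_x \to \mathrm{U}(1)$ with $\gamma \cdot (w, \ell) = (w, \rho(\gamma)\ell)$. If $\rho(\gamma) = 1$, then $\gamma$ acts trivially on all of $V$. Faithfulness of the slice representation, noted just before the lemma, then forces $\gamma = 1$. So $\rho$ is injective, and $\Gamma_x$ is isomorphic to a finite subgroup of $\mathrm{U}(1) \cong S^1$. Every finite subgroup of $S^1$ is the cyclic group of $m$-th roots of unity, where $m = \brs{\Gamma_x}$, so $\Gamma_x$ is cyclic.

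The only step needing care is the justification that $\Gamma_x$ acts complex linearly and unitarily, with faithful slice representation. Without complex linearity, the real fixed space of codimension two has a real 2-dimensional complement on which $\Gamma_x$ acts by $\mathrm{O}(2)$. Then orientation-reversing elements could produce dihedral groups. Complex linearity, coming from holomorphicity of the action, makes this impossible, since then $\Gamma_x$ lands in $\mathrm{U}(1) \subset \mathrm{SO}(2)$. Alternatively, orientation preservation alone would suffice: the real version of the argument gives an injection into $\mathrm{SO}(2)$, whose finite subgroups are cyclic. I would present the complex argument and remark that the real one works too.
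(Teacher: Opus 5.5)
Your proof is correct and is essentially the paper's argument. The paper likewise restricts the faithful slice representation to the two-dimensional complement $(V^{\Gamma_x})^{\perp}$, obtaining an injection into $SO(2)\cong S^1$, and concludes because finite subgroups of $S^1$ are cyclic; your version additionally spells out, via complex linearity (or orientation preservation) of the $\mathrm{U}(2)$-action, why the image lands in $\mathrm{U}(1)\cong SO(2)$ rather than $O(2)$, a point the paper leaves implicit.
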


\begin{proof}
By hypothesis we obtain an induced faithful representation
\[
\rho:\Gamma_x\longrightarrow SO((V^{\Gamma_x})^{\perp}) \cong SO(2) \cong S^1.
\]
Since finite subgroups of $S^1$ are cyclic, the lemma follows.
\end{proof}

\begin{lemma}
\label{lem:local-divisor-strata} We have a splitting
\[
V=L_{1}\oplus L_{2},
\]
such that every point in the slice chart with nontrivial stabilizer lies
in \(L_{1}\cup L_{2}\). Consequently, at most two divisors may meet at a given point.
\end{lemma}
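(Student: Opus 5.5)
The plan is to diagonalize the slice representation simultaneously and then read off the stabilizers of points in the slice chart directly. Fix $x$ with $\Gamma = \Gamma_x$, and use the slice model $T \times_{\Gamma} V$ with $V \cong \mathbb C^2$. As noted above, the action of $\Gamma$ on $V$ is faithful and factors through $\mathrm{U}(2)$. Since $\Gamma$ is a finite abelian group of commuting unitary matrices, it is simultaneously unitarily diagonalizable. This gives an orthogonal splitting $V = L_1 \oplus L_2$ into complex lines, on which $\gamma \in \Gamma$ acts by
\[
\gamma \cdot (v_1, v_2) = (\chi_1(\gamma) v_1, \chi_2(\gamma) v_2)
\]
for characters $\chi_1, \chi_2 : \Gamma \to S^1$. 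Faithfulness is then exactly the statement $\ker \chi_1 \cap \ker \chi_2 = \{1\}$.

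Next I identify the stabilizer of a point $[t, v]$ in $T \times_\Gamma V$. An element $s \in T$ fixes $[t,v]$ iff $[st, v] = [t, v]$, i.e. iff there is $\gamma \in \Gamma$ with $st = t\gamma^{-1}$ and $\gamma \cdot v = v$. Since $T$ is abelian, this says $s = \gamma^{-1} \in \Gamma$ with $\gamma \cdot v = v$. Hence
\[
\operatorname{Stab}_T([t,v]) = \operatorname{Stab}_\Gamma(v) = \{\gamma : \chi_1(\gamma) = 1 \text{ if } v_1 \neq 0,\ \chi_2(\gamma) = 1 \text{ if } v_2 \neq 0\}.
\]
If both $v_1 \neq 0$ and $v_2 \neq 0$, this group is $\ker\chi_1 \cap \ker \chi_2 = \{1\}$. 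So every point of the slice chart with nontrivial stabilizer has $v_1 = 0$ or $v_2 = 0$, i.e. it lies in $L_1 \cup L_2$. More precisely, points of $L_1 \setminus \{0\}$ have stabilizer $\ker \chi_2$, points of $L_2 \setminus \{0\}$ have stabilizer $\ker\chi_1$, and only the origin has stabilizer all of $\Gamma$.

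For the consequence, the orbifold chart of the quotient near the image of $x$ is $V/\Gamma$. By the previous step, the orbifold locus there is contained in the image of $L_1 \cup L_2$, a union of at most two (possibly empty, if the corresponding kernel is trivial) local curves through the image point. Any orbifold divisor $D_i$ passing through that point must, near it, be contained in this locus. Hence at most two divisors can meet at a given point.

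The only delicate step is the bookkeeping in the second paragraph: checking that stabilizers in the twisted product $T\times_\Gamma V$ really reduce to stabilizers in $\Gamma$ of the slice vector. This uses that $T$ is abelian, so that no conjugation of $\Gamma$ intervenes. Everything else is linear algebra.
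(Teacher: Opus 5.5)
Your proposal is correct and is essentially the paper's proof: split the slice representation into characters, use faithfulness to get $\ker\chi_1\cap\ker\chi_2=\{1\}$, and conclude that points with $v_1v_2\neq 0$ have trivial stabilizer; you also add an explicit check that stabilizers in $T\times_\Gamma V$ reduce to $\operatorname{Stab}_\Gamma(v)$. One slip in your side remark, not needed for the lemma: by your own displayed formula, points of $L_1\setminus\{0\}$ have stabilizer $\ker\chi_1$ (not $\ker\chi_2$), and likewise $L_2\setminus\{0\}$ has stabilizer $\ker\chi_2$, which is the identification $\Gamma_i=\ker\chi_i$ the paper uses later.
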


\begin{proof}
Since \(\Gamma_x\) is finite abelian, the slice representation splits into
characters:
\[
V=L_{1}\oplus L_{2},\qquad
\gamma\cdot(z_{1},z_{2})
=
\bigl(\chi_{1}(\gamma)z_{1},\chi_{2}(\gamma)z_{2}\bigr).
\]
Faithfulness gives
\[
\ker\chi_{1}\cap\ker\chi_{2}=\{1\}.
\]
Thus, if \(z_{1}z_{2}\neq0\), the stabilizer of \((z_{1},z_{2})\) is trivial.  Hence the
only positive-dimensional nonprincipal strata are the lines \(L_{i}\) for
which \(\ker\chi_{i}\neq\{1\}\).  As their images in \(V/\Gamma_x\) are precisely the local codimension-two
orbifold strata, the lemma follows.
\end{proof}

\begin{lemma} \label{l:isolatedcyclicity} Given $x \in M$ such that $\pi(x)$ is an isolated orbifold point, $
\Gamma_x$ is cyclic.
\end{lemma}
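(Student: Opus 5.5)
The plan is to read off the stabilizers of points in the slice chart near $x$ directly from the character decomposition of Lemma \ref{lem:local-divisor-strata}. Isolatedness of the orbifold point $\pi(x)$ will then force one of the characters to be injective. Since that character takes values in $S^1$, this gives cyclicity.

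First I identify stabilizers in the slice chart. By the slice theorem, a neighborhood of $T\cdot x$ is equivariantly $T\times_{\Gamma_x} V$. For $v\in V$, the $T$-stabilizer of the point $[1,v]$ consists of those $t\in T$ with $[t,v]=[1,v]$. This means $t=\gamma^{-1}\in\Gamma_x$ and $\gamma\cdot v=v$, so
\[
\operatorname{Stab}_T([1,v])=\operatorname{Stab}_{\Gamma_x}(v).
\]
Every point of the neighborhood is in the $T$-orbit of such a point, and stabilizers are constant along orbits because $T$ is abelian. Hence a point near $T\cdot x$ has nontrivial stabilizer exactly when it is of the form $[t,v]$ with $v\neq 0$ fixed by some nontrivial $\gamma\in\Gamma_x$. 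Such points project to orbifold points of $B$ near $\pi(x)$. Since $\pi(x)$ is an isolated orbifold point, after shrinking the slice I may assume that $\Gamma_x$ acts freely on $V\setminus\{0\}$.

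Next I apply Lemma \ref{lem:local-divisor-strata}, writing $V=L_1\oplus L_2$ with $\gamma\cdot(z_1,z_2)=(\chi_1(\gamma)z_1,\chi_2(\gamma)z_2)$. A point $(z_1,0)$ with $z_1\neq 0$ has stabilizer $\ker\chi_1$, so freeness forces $\ker\chi_1=\{1\}$. Then $\chi_1:\Gamma_x\to S^1$ is an injective homomorphism, and $\Gamma_x$ is isomorphic to a finite subgroup of $S^1$, hence cyclic. (By the same argument $\chi_2$ is also injective, though this is not needed.)

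The only point needing care is the first step: translating ``$\pi(x)$ is an isolated orbifold point'' into ``$\Gamma_x$ acts freely on $V\setminus\{0\}$''. This requires the stabilizer identification above, together with the fact that any nontrivial stabilizer along the lines $L_i$ would produce a whole local divisor stratum through $\pi(x)$, contradicting isolatedness. Once this is in place, the argument is a one-line consequence of Lemma \ref{lem:local-divisor-strata}, in the same spirit as Lemma \ref{lem:codim-two-isotropy-cyclic}.
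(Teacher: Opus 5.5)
Your proposal is correct and follows the same route as the paper: a nontrivial $\ker\chi_1$ would give every point of $L_1\setminus\{0\}$ a nontrivial stabilizer, producing a divisor stratum through $\pi(x)$ and contradicting isolatedness, so $\chi_1$ embeds $\Gamma_x$ into $S^1$. Your explicit identification $\operatorname{Stab}_T([1,v])=\operatorname{Stab}_{\Gamma_x}(v)$ spells out what the paper takes for granted; also, freeness on all of $V\setminus\{0\}$ comes from linearity of the action (fixed sets of each $\gamma$ are linear subspaces, so a fixed $v\neq 0$ gives fixed points arbitrarily close to $0$), not from shrinking the slice.
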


\begin{proof}
Using the notation and setup of Lemma \ref{lem:local-divisor-strata}, we claim that each representation $\chi_i$ is faithful. Indeed, if
$\ker\chi_1$ were nontrivial, then every point of the complex line
$L_1\setminus\{0\}$ would have nontrivial stabilizer, and hence determine a codimension-two orbifold stratum
through $p$.  Thus $\chi_1$ gives an embedding $\Gamma_x\hookrightarrow S^1$, 
thus $\Gamma_x$ is cyclic.
\end{proof}

Finally we record a general criterion for cyclicity of the stabilizer at a
transverse intersection of two codimension-two isotropy strata.

\begin{lemma}
\label{lem:cyclic-intersection-stabilizer}
Suppose that divisors 
\(D_i\) and \(D_j\) meet transversely at $\pi(x)$.  Let $\Gamma_i\cong\mathbb Z/m_i,
    \Gamma_j\cong\mathbb Z/m_j$
be the generic stabilizers along \(D_i\) and \(D_j\), respectively.  Then
\[
    \Gamma_x \text{ is cyclic}
    \quad\Longleftrightarrow\quad
    \gcd(m_i,m_j)=1.
\]
\end{lemma}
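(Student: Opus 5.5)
We work in the slice chart of Lemma \ref{lem:local-divisor-strata}, where $V = L_1 \oplus L_2$ and $\Gamma_x$ acts through characters $\chi_1, \chi_2$ with $\ker\chi_1 \cap \ker\chi_2 = \{1\}$. The two divisors through $\pi(x)$ are the images of the lines $L_1$ and $L_2$, say $L_1 \mapsto D_i$ and $L_2 \mapsto D_j$. The first step is to identify the generic stabilizers. A point $(z_1,0)$ with $z_1 \neq 0$ has stabilizer in $\Gamma_x$ equal to $\ker \chi_1$. By the slice theorem, the $T$-stabilizer of the corresponding point of $T \times_{\Gamma_x} V$ is this same subgroup. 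Hence $\Gamma_i \cong \ker\chi_2$? No: the correct identification is $\Gamma_i = \ker\chi_2 \cap \Gamma_x$, the elements acting trivially on $L_1$; more precisely, the stabilizer of $(z_1,0)$ is $\{\gamma : \chi_1(\gamma)=1\}$. We must decide which line carries which divisor, and that bookkeeping is symmetric. So we set $K_1 = \ker\chi_1$, the stabilizer along $L_1$, and $K_2 = \ker\chi_2$, the stabilizer along $L_2$. Then $K_1 \cong \mathbb Z/m_i$ and $K_2 \cong \mathbb Z/m_j$, each cyclic since $\chi_2|_{K_1}$ is faithful.

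The second step is to show $\Gamma_x = K_1 \times K_2$. We already have $K_1 \cap K_2 = \{1\}$. For generation, consider $\chi = (\chi_1,\chi_2) : \Gamma_x \hookrightarrow S^1 \times S^1$. The quotient $V/\Gamma_x$ near $\pi(x)$ is an orbifold chart in which the only orbifold strata are $D_i$ and $D_j$, meeting transversely. The intended argument is that $V/K_1K_2 \cong \mathbb C^2/(\mathbb Z/m_i \times \mathbb Z/m_j) \cong \mathbb C^2$ is smooth, being a product of branched covers $z \mapsto z^{m}$. The residual group $\Gamma_x/K_1K_2$ then acts on $\mathbb C^2$ freely off $\{z_1z_2=0\}$ and preserves the axes. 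Any nontrivial residual element would either fix a point of an axis with larger stabilizer, contradicting genericity of $m_i, m_j$, or act freely on $\mathbb C^2\setminus\{0\}$ with an isolated fixed point, producing an isolated orbifold point at $\pi(x)$ beyond the two divisors. I expect this to be the main obstacle. The ``transverse divisors'' hypothesis must be interpreted as saying the local orbifold structure at $\pi(x)$ is exactly the normal-crossing one $\mathbb C^2/(\mathbb Z/m_i\times\mathbb Z/m_j)$. The step therefore requires either invoking that the orbifold divisor data $\Delta$ determines the local group in the standard way, or showing directly via $\chi$ that $|\Gamma_x| = m_im_j$. I would try the direct route first: the images $\chi_2(K_1)$ and $\chi_1(K_2)$ are the full rotation groups of the transverse slices, and the effectivity of the orbifold chart forces equality.

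The final step is elementary. Given $\Gamma_x \cong \mathbb Z/m_i \times \mathbb Z/m_j$, the group is cyclic if and only if $\gcd(m_i,m_j)=1$, by the Chinese remainder theorem. Its exponent is $\mathrm{lcm}(m_i,m_j)$, which equals $m_im_j$ exactly when the two orders are coprime.
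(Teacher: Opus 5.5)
\textbf{There is a genuine gap in your second step.} The claim $\Gamma_x = K_1 \times K_2$ (equivalently $|\Gamma_x| = m_i m_j$) is not merely unproven; it is false under the hypotheses of the lemma. Transversality of $D_i$ and $D_j$ only says that the nonprincipal strata near $x$ are the two lines $L_1, L_2$. It does not prevent the residual group $\Gamma_x/K_1K_2$ from acting on $V/K_1K_2 \cong \mathbb C^2$ freely off the origin. That residual action produces an isolated quotient singularity sitting at $D_i \cap D_j$. This is exactly the alternative you hoped to rule out, and the paper's remark after the lemma says it generically occurs. Effectivity of the chart does not exclude it, since the residual action is itself effective.

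For a concrete example, take $\Gamma_x = \mathbb Z/12$ with generator $\gamma$ acting by $\chi_1(\gamma) = e^{2\pi i/6}$ and $\chi_2(\gamma) = e^{2\pi i/4}$.
\begin{itemize}
\item The action is faithful.
\item $\ker\chi_1 = \langle \gamma^6\rangle \cong \mathbb Z/2$ and $\ker\chi_2 = \langle \gamma^4\rangle \cong \mathbb Z/3$, so $K_1K_2$ has index $2$ in $\Gamma_x$.
\item $\gamma$ acts on the invariant coordinates $(z_1^3, z_2^2)$ by $(-1,-1)$, which is free off the origin.
\end{itemize}
Here $\Gamma_x$ is cyclic and $\gcd(2,3)=1$, consistent with the lemma, yet $\Gamma_x \neq K_1\times K_2$. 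Reinterpreting ``transverse'' to force the local group to be $\mathbb Z/m_i\times\mathbb Z/m_j$ would prove a weaker statement. That weaker statement would not cover the intersection points with residual singularities, which is where the lemma is needed for Theorem \ref{t:BHE3foldsrank0}.

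\textbf{How to fix it.} The lemma holds without any product decomposition, and the paper's proof avoids one.
\begin{itemize}
\item For $(\Rightarrow)$: $K_1$ and $K_2$ have orders $m_i, m_j$ and $K_1\cap K_2=\{1\}$ by faithfulness. In a cyclic group, two subgroups of these orders intersect in the subgroup of order $\gcd(m_i,m_j)$, so the gcd is $1$.
\item For $(\Leftarrow)$: suppose $\Gamma_x$ is not cyclic, so some $p$-primary part $P$ is non-cyclic. Since $\chi_1(P)\subset S^1$ is cyclic, $\chi_1|_P$ is not injective. Hence $P\cap K_1\neq\{1\}$ and $p \mid m_i$. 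Symmetrically $p\mid m_j$, contradicting coprimality.
\end{itemize}
Your first step, identifying $K_1=\ker\chi_1$ as the stabilizer along $L_1$ and noting it is cyclic because $\chi_2|_{K_1}$ is injective, is fine once the false start about $\ker\chi_2$ is removed. It is the second step that must be replaced.
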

\begin{proof}
Using Lemma \ref{lem:local-divisor-strata} we obtain a splitting $V = L_1 \oplus L_2$ and a splitting of the slice representation into characters $\chi_1, \chi_2$.  We relabel the divisors $D_1, D_2$ to correspond to the image of the two planes $L_1$ and $L_2$, so that then $\Gamma_1=\ker \chi_1, \Gamma_2=\ker \chi_2$.
Since the slice representation is faithful,
\[
    \Gamma_i\cap\Gamma_j
    =
    \ker\chi_1\cap\ker\chi_2
    =
    \ker(\chi_1,\chi_2)
    =
    \{1\}.
\]
First assuming \(\Gamma_x\) is cyclic, we know that for two subgroups of a finite
cyclic group, the order of their intersection is the greatest common
divisor of their orders, i.e.
\[
    \gcd(m_i,m_j) = |\Gamma_i\cap\Gamma_j| = 1,
\]
as claimed.

Now suppose that $\gcd(m_1,m_2)=1$, and 
assume \(\Gamma_x\) is not cyclic.  Since
\(\Gamma_x\) is finite abelian there exists a prime \(p\) for which the
\(p\)-primary subgroup $(\Gamma_x)_{(p)}$ is not cyclic.
Note that the image $\chi_1( (\Gamma_x)_{(p)})$ is a finite subgroup of $S^1$, hence cyclic.  It follows that $(\Gamma_x)_{(p)}$ has nontrivial intersection with $\ker \chi_1$, hence $p$ divides $m_1$.  The same argument shows that also $p$ divides $m_2$, contradicting $\gcd(m_1, m_2) = 1$.
\end{proof}

\begin{rmk} Note that in the setup of Lemma \ref{lem:cyclic-intersection-stabilizer}, the stabilizer $\Gamma_x$ need not be generated by $\Gamma_i$ and $\Gamma_j$.  Indeed, the intersection point is generically an orbifold point with group $\Gamma_x / \IP{\Gamma_i, \Gamma_j}$.
\end{rmk}

\subsubsection{Torsion Computation}

Here we prove the generalization of \cite[Proposition 28]{kollar2006circle}.  We first give the statement then prove a series of lemmas leading to the proof.

\begin{prop} \label{p:torsion-formula}
Let $M^6$ be a compact simply connected manifold with a locally
free $T^2$-action with orbit space $B$.  Assume:
\begin{enumerate}
    \item $B$ is cyclic,
    \item The orbifold locus of $B$ consists of a finite union of points and a union of closed oriented surfaces 
    \[
        D=\bigcup_{i=1}^r D_i
    \]
    meeting transversely,
    \item for every intersection point $p\in D_i\cap D_j$, the natural map
    \[
\Gamma_i\oplus\Gamma_j\longrightarrow\Gamma_p
    \]
    is injective, where $\Gamma_i\cong\mathbb Z/m_i$ is the generic
    stabilizer along $D_i$.
\end{enumerate}
Then
\[
    \Tor H_2(M)
    \cong
    \bigoplus_{i=1}^r H_1(D_i;\mathbb Z/m_i)
    \cong
    \bigoplus_{i=1}^r (\mathbb Z/m_i)^{2g(D_i)}.
\]
\end{prop}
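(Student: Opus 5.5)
The plan is to compute $H_2(M)$ through the Leray/Cartan spectral sequence of the orbit map $\pi\colon M\to B$. Equivalently, I would use an equivariant CW decomposition of $M$ adapted to the stratification of $B$ into the regular part, the divisors $D_i$, and the isolated orbifold points. Over a point $b\in B$ the fibre is $T/\Gamma_x$, which is again a $2$-torus. The sheaves $R^q\pi_*\mathbb Z$ (or their homological analogues, the local systems $b\mapsto H_q(T/\Gamma_b)$) are therefore locally constant away from $D\cup\{p_i\}$. At points with nontrivial stabilizer they have controlled jumps. Concretely, the map $H_1(T)\to H_1(T/\Gamma)$ is injective with cokernel $\Gamma$, using that $\Gamma$ is cyclic by hypothesis (1). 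The obstruction to $\pi$ behaving like a principal bundle is then encoded in skyscraper-type sheaves supported on $D$ with stalks $\Gamma_i\cong \mathbb Z/m_i$.

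The key steps run as follows.

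\textbf{Step 1.} I would compare $\pi$ with the principal $T$-bundle over $B^\circ=B\setminus(D\cup\{p_i\})$. I would cover $B$ by the regular part, tubular neighbourhoods $U_i\simeq T\times_{\Gamma_i}(\mathbb C\times\mathbb C)$ of the $D_i$ (after removing small balls around the points), and the slice charts $T\times_{\Gamma_x}V$ around intersection and isolated points. Each local piece has homology that can be computed directly. For example, $T\times_{\mathbb Z/m}\mathbb C$ is homotopy equivalent to a torus $T/\Gamma$.

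\textbf{Step 2.} I would assemble these pieces with Mayer--Vietoris, or equivalently with the $E^2$ page $H_p(B;\mathcal H_q)$. The torsion contribution should come from $H_1(D_i;\mathcal H_1\text{-cokernel})=H_1(D_i;\mathbb Z/m_i)$ sitting in total degree $2$. Here I use that the local system along $D_i$ is trivial, since $T$ is connected and the $T$-action is abelian, so monodromy acts trivially on $H_*(T/\Gamma)$.

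\textbf{Step 3.} I would use hypothesis (3) to see that, at intersection points $p\in D_i\cap D_j$, the contributions from the two divisors do not interfere. Injectivity of $\Gamma_i\oplus\Gamma_j\to\Gamma_p$ means that the cokernel sheaf near $p$ is the direct sum of the pushforwards of $\mathbb Z/m_i$ and $\mathbb Z/m_j$ from the two branches. For isolated points, Lemma \ref{l:isolatedcyclicity} gives cyclic $\Gamma_x$ acting freely off $0$, so the link is a lens space. Its contribution lives only in $H_0$ of a point and affects at most $H_1$ or degree-$3$ terms, not $\Tor H_2$.

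\textbf{Step 4.} I would use $\pi_1(M)=0$, hence $H_1(M)=0$, to kill the remaining potential torsion. This vanishing forces $B$ to be orbifold simply connected. It also forces the Euler classes together with the $\Gamma_i$ to generate, so that the torsion coming from $H_0(B;\text{coker})$ and from the Euler-class differential $d_2\colon H_2(B;\mathcal H_0)\to H_0(B;\mathcal H_1)$ cancels. This is the analogue of Koll\'ar's argument in \cite[Proposition 28]{kollar2006circle}, now done with two circle factors.

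The main obstacle I expect is Step 2, specifically controlling the differentials and extension problems in the spectral sequence. I need to show that $H_1(D_i;\mathbb Z/m_i)$ survives to $E^\infty$. I also need to show that no other torsion arises in total degree $2$, for instance from $H_2(B)$ with the modified coefficients, or from $H_0$ terms twisted by the Euler classes. My first approach would be to reduce to Koll\'ar's circle case. I would choose a splitting $T=S^1_a\times S^1_b$ such that $S^1_b$ acts freely; this is possible when all stabilizers are cyclic, after passing to a suitable sub-circle generic with respect to the finitely many $\Gamma$'s. Then $M\to M/S^1_b$ is a principal circle bundle over a $5$-dimensional Seifert fibration $Y\to B$ with the same orbifold data. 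Koll\'ar's result gives $\Tor H_2(Y)$, and the Gysin sequence for $M\to Y$ then transfers it to $M$, using $H_1(M)=0$ to control the Euler class term.
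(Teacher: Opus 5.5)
You leave the decisive step open: showing that nothing besides $\bigoplus_i H_1(D_i;\mathbb Z/m_i)$ contributes torsion in total degree $2$. In the spectral sequence you set up, this is harder than Steps 3--4 suggest. Because you keep the actual fibres $T/\Gamma_b$, the row $q=2$ is not inert. The map $H_2(T/\Gamma_\sigma)\to H_2(T/\Gamma_\tau)$ is multiplication by $[\Gamma_\tau:\Gamma_\sigma]$. So $E^2_{0,2}$ is generated by fibre classes subject to relations such as $|\Gamma_p|\,x_p=x_{\mathrm{reg}}$ and $m_i\,x_u=x_{\mathrm{reg}}$, and it carries torsion from both divisors and isolated points. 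For instance, two isolated points of orders $a,b$ already produce an element of order $\gcd(a,b)$ there. The reasoning in Step 3, that isolated points only affect $H_1$ or degree $3$, is therefore wrong in your framework. All of $E^2_{0,2}$, including the generic fibre class, must be killed by $d_2\colon E^2_{2,1}\to E^2_{0,2}$, and this is where the Euler classes and $\pi_1(M)=0$ would have to enter. Unless $E^\infty_{0,2}=0$ exactly, $\Tor H_2(M)$ is an extension involving $E^\infty_{0,2}$ rather than $E^\infty_{1,1}$ itself. Step 4 does not supply this. The differential you name, $E^2_{2,0}\to E^2_{0,1}$, has cokernel in total degree $1$ and cannot remove torsion sitting in $E^2_{0,2}$. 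You also need $H_3(B)=0$, so that $d_2\colon E^2_{3,0}\to E^2_{1,1}$ vanishes, and an argument that $E^\infty_{2,0}\subseteq H_2(B)$ is free. Neither appears in your plan.

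The fallback fails because a freely acting circle need not exist when all stabilizers are cyclic. Every circle subgroup of $T^2$ contains exactly one element of order two. So if $\langle(\tfrac12,0)\rangle$, $\langle(0,\tfrac12)\rangle$ and $\langle(\tfrac12,\tfrac12)\rangle$ occur as generic stabilizers of three pairwise disjoint divisors, no circle acts freely. This is compatible with every hypothesis of the proposition. For example, take $\mathrm{Bl}_3\mathbb P^2$ with the exceptional curves as divisors; suitable orbifold Chern classes make $M$ simply connected.

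The idea you are missing is the paper's: pass to the Borel construction $\bar B=ET^2\times_{T^2}M$. Since $M$ is simply connected, $0\to H_2(M)\to H_2(\bar B)\to\mathbb Z^2\to 0$ is exact, so the torsion is unchanged. Over a cell with stabilizer $\Gamma_\sigma$ the fibre becomes $B\Gamma_\sigma$, which for cyclic $\Gamma_\sigma$ has $H_1=\Gamma_\sigma$ and $H_2=0$. Hence $E^2_{0,2}=0$ outright, and $E^2_{1,1}=H_1(C^{\mathrm{stab}}_*)\cong\bigoplus_i H_1(D_i;\mathbb Z/m_i)$ survives because $H_3(B)=0$. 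Finally, $E^\infty_{2,0}$ is free by excising cones around the isolated points, so the filtration splits and the torus fibres and Euler classes never enter.
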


The main argument works via the associated Borel construction.  To fix notation, first let $E T^k$ denote the universal free $T^k$-space, and let $B T^k = E T^k / T^k$ denote the classifying space.  We then denote the Borel construction via
\begin{align*}
    \bar{B} := ET^2 \times_{T^2} M.
\end{align*}
We first show that $\bar{B}$ is simply connected, and the second homology groups of $M$ and $\bar{B}$ are related in simple fashion:
\begin{lemma}\label{lem:borel-to-total-space}
Given $M$ a compact simply connected manifold with a $T^k$ action, then $\pi_1(\bar{B}) = 0$, and there is a short exact sequence
\[
    0\longrightarrow H_2(M)
    \longrightarrow H_2(\bar B)
    \longrightarrow \mathbb Z^k
    \longrightarrow 0.
\]
In particular,
\[
    \Tor H_2(M)
    \cong
    \Tor H_2(\bar B).
\]
\end{lemma}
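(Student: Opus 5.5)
The plan is to use the Borel fibration
\[
    M \longrightarrow \bar B = ET^k\times_{T^k} M \longrightarrow BT^k,
\]
which is a fibre bundle with fibre $M$ because $ET^k\times M\to \bar B$ is a principal $T^k$-bundle. We take $BT^k=(\mathbb{CP}^\infty)^k$, which is simply connected with $\pi_2(BT^k)\cong\mathbb Z^k$ and $H_1(BT^k)=0$, $H_2(BT^k)\cong \mathbb Z^k$, $H_3(BT^k)=0$.

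\textbf{Step 1: $\pi_1(\bar B)=0$.} The long exact homotopy sequence of the fibration contains
\[
    \pi_1(M)\to\pi_1(\bar B)\to\pi_1(BT^k).
\]
Both outer groups vanish, so $\pi_1(\bar B)=0$.

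\textbf{Step 2: the short exact sequence.} Here I would use the Serre spectral sequence in homology,
\[
    E^2_{p,q}=H_p(BT^k;H_q(M)).
\]
The local coefficients are trivial because the base is simply connected. Since $\pi_1(M)=0$, we have $H_1(M)=0$, so the row $q=1$ vanishes. On the line $p+q=2$ the only possibly nonzero terms are $E^2_{0,2}=H_2(M)$ and $E^2_{2,0}=H_2(BT^k)\cong\mathbb Z^k$.

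Next I check which differentials can touch these terms.
\begin{itemize}
\item[(i)] The differential $d^2:E^2_{2,0}\to E^2_{0,1}=0$ is zero.
\item[(ii)] The differential $d^3:E^3_{3,0}\to E^3_{0,2}$ starts from $E^2_{3,0}=H_3(BT^k)=0$, so it is zero.
\item[(iii)] The differential $d^2:E^2_{2,1}\to E^2_{0,2}$ starts from a group in the vanishing row $q=1$, so it is zero.
\end{itemize}
Hence $E^\infty_{0,2}=H_2(M)$ and $E^\infty_{2,0}=\mathbb Z^k$. The edge filtration therefore gives
\[
    0\to H_2(M)\to H_2(\bar B)\to \mathbb Z^k\to 0.
\]

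As an alternative to the spectral sequence, one can argue with homotopy groups. By Step 1 and Hurewicz, $H_2(\bar B)\cong\pi_2(\bar B)$ and $H_2(M)\cong\pi_2(M)$. The homotopy sequence then reads
\[
    \pi_2(T^k)=0\to\pi_2(M)\to\pi_2(\bar B)\to\pi_2(BT^k)\cong\mathbb Z^k\to\pi_1(M)=0 .
\]
Here I have used the fibration $T^k\to ET^k\times M\to\bar B$ to see that $\pi_2(ET^k\times M)=\pi_2(M)$. This yields the same exact sequence directly.

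\textbf{Step 3: torsion.} Since $\mathbb Z^k$ is free, the sequence splits, giving $H_2(\bar B)\cong H_2(M)\oplus\mathbb Z^k$. Hence $\Tor H_2(\bar B)\cong\Tor H_2(M)$.

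The only delicate point is Step 2: either checking that no differential hits $E_{0,2}$, or identifying $\pi_2$ of the Borel space correctly. Both reduce to the vanishing of $H_1(M)$, $H_3(BT^k)$ and $\pi_2(T^k)$, so I expect no serious obstacle.
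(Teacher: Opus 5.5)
Your proof is correct. Your ``alternative'' argument is essentially the paper's proof. The paper applies the homotopy long exact sequence of $M\to\bar B\to BT^k$ with $\pi_3(BT^k)\cong\pi_2(T^k)=0$, $\pi_2(BT^k)\cong\mathbb Z^k$ and $\pi_1(M)=0$, and then passes to homology by Hurewicz on both $M$ and $\bar B$. That last step is why $\pi_1(\bar B)=0$ does real work there.

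Your primary route, the homological Serre spectral sequence, is genuinely different. Your bookkeeping of the differentials into and out of $E_{0,2}$ and $E_{2,0}$ is right. It works directly in homology and uses only $H_1(M)=0$ together with $H_3(BT^k)=0$. So it would still apply to a connected fibre with perfect fundamental group, where Hurewicz is unavailable. The paper's homotopy argument is shorter and avoids spectral-sequence machinery, but it needs simple connectivity of both $M$ and $\bar B$ to convert $\pi_2$ into $H_2$. For this lemma $\pi_1(M)=0$ is assumed, so nothing is lost either way.

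One small correction: $\bar B\to BT^k$ is a fibre bundle with fibre $M$ because it is the bundle associated to the universal bundle $ET^k\to BT^k$. The fact that $ET^k\times M\to\bar B$ is a principal $T^k$-bundle does not imply this; it is a different fibration, namely the one your alternative argument actually uses.
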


\begin{proof}
As $\bar{B}$ is an $M$-fiber bundle over $BT^k$, from the homotopy long exact sequence we obtain
\[
    \pi_1(M) \longrightarrow \pi_1(\bar{B}) \to \pi_1(B T^k).
\]
Since $\pi_1(M) = \pi_1(B T^k) = 0$ it follows that $\pi_1(\bar{B}) = 0$.  Also from the long exact sequence we obtain
\[
    \pi_3(BT^k)\longrightarrow \pi_2(M)
    \longrightarrow \pi_2(\bar B)
    \longrightarrow \pi_2(BT^k)
    \longrightarrow \pi_1(M).
\]
Since
\[
    \pi_3(BT^k)= \pi_2(T^k) = 0,
    \qquad
    \pi_2(BT^k) = \pi_1(T^k) \cong\mathbb Z^k,
    \qquad
    \pi_1(M)=0,
\]
we obtain a short exact sequence
\[
    0\longrightarrow \pi_2(M)
    \longrightarrow \pi_2(\bar B)
    \longrightarrow \mathbb Z^k
    \longrightarrow 0.
\]
Using simply connectivity of $M$ and $\bar{B}$, the Hurewicz theorem finishes the proof.
\end{proof}

We approach the homology of $\bar{B}$ via a spectral sequence argument.  Let $B=M/T^2$ denote the coarse quotient and let
\[
\bar B=ET^2\times_{T^2}M
\]
denote the Borel construction. Choose a finite CW decomposition of $B$ adapted to the orbit-type stratification: each orbifold divisor $D_i$ is a subcomplex, every divisor intersection and isolated orbifold point is a vertex, and the stabilizer is constant over the interior of each cell.  For a cell $\sigma\subset B$, denote this stabilizer by $\Gamma_\sigma$.  Over the interior of a cell $\sigma$, the Borel construction is homotopy equivalent to
$B\Gamma_\sigma\times\mathring{\sigma}$. 
More explicitly, a $p$-cell $\sigma\subset B$ with stabilizer
$\Gamma_\sigma$ contributes to the Borel construction a relative piece
modeled on
\[
B\Gamma_\sigma\times (D^p,S^{p-1}).
\]
After choosing a CW model for $B\Gamma_\sigma$, each $q$-cell of
$B\Gamma_\sigma$ therefore gives a cell of total dimension $p+q$ lying
over $\sigma$.  We filter these cells by the dimension $p$ of the
underlying cell of the coarse quotient $B$.  Thus the associated
bigraded cellular complex has, in bidegree $(p,\ast)$, a summand
corresponding to the cellular chains of $B\Gamma_\sigma$ for each
$p$-cell $\sigma\subset B$.  The spectral sequence computes the homology of the
Borel construction $\bar B$, while the homology of the coarse quotient
$B$ appears naturally in its bottom row, and the higher rows record the additional homology arising from the
stabilizer groups.

\begin{lemma} \label{lem:spectralpage2} One has
\[
    E^{1}_{p,q}
    =
    \bigoplus_{\dim \sigma=p} H_{q}(B\Gamma_{\sigma}).
\]
If every stabilizer is finite cyclic, then one has 
\[
    E^{2}_{2,0}\cong H_{2}(B),
    \qquad
    E^{2}_{1,1}\cong H_{1}(C^{\mathrm{stab}}_{*}),
    \qquad
    E^{2}_{0,2}=0,
\]
where
\[
    C^{\mathrm{stab}}_{p}
    :=
    \bigoplus_{\dim \sigma=p}\Gamma_{\sigma}.
\]
\end{lemma}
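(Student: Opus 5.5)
The plan is to take the $E^1$ description as essentially the definition of the spectral sequence of the filtered cellular complex, and then compute the three $E^2$ entries directly from group homology of cyclic groups.

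For $E^1$: filter the Borel construction by $F_p := $ the preimage of the $p$-skeleton of $B$ under $\bar B\to B$. The relative homology $H_{p+q}(F_p,F_{p-1})$ splits over $p$-cells. By the model $B\Gamma_\sigma\times(D^p,S^{p-1})$ and excision together with the Künneth formula, each $p$-cell contributes $H_{p+q}(B\Gamma_\sigma\times(D^p,S^{p-1}))\cong H_q(B\Gamma_\sigma)$, since $H_*(D^p,S^{p-1})$ is $\mathbb Z$ concentrated in degree $p$ and free. This gives
\[
E^1_{p,q}=\bigoplus_{\dim\sigma=p}H_q(B\Gamma_\sigma).
\]
The only point to verify is that the homotopy equivalence over the open cell is compatible with the relative model. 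This follows because the stabilizer is constant on $\mathring\sigma$, so $\bar B|_{\mathring\sigma}\simeq ET^2\times_{T^2}(T^2/\Gamma_\sigma\times\mathring\sigma)\simeq B\Gamma_\sigma\times\mathring\sigma$ by the slice theorem.

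Now assume every $\Gamma_\sigma$ is finite cyclic, and use $H_0(B\mathbb Z/m)=\mathbb Z$, $H_1=\mathbb Z/m$, and $H_2=0$.

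\textbf{Row $q=0$.} Here $E^1_{p,0}=\bigoplus_{\dim\sigma=p}\mathbb Z$. I claim $d^1$ is the cellular boundary of $B$. This holds because the map $\bar B\to B$ is compatible with the filtrations and induces the identity on the $q=0$ row (each $H_0(B\Gamma_\sigma)\to H_0(\mathrm{pt})$ is an isomorphism), so the $d^1$ differentials agree with those of the trivial filtration on $B$. Hence $E^2_{p,0}=H_p(B)$, and in particular $E^2_{2,0}\cong H_2(B)$.

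\textbf{Row $q=1$.} Here $E^1_{p,1}=\bigoplus\Gamma_\sigma=C^{\rm stab}_p$, with $d^1:C^{\rm stab}_p\to C^{\rm stab}_{p-1}$ given by the cellular incidence numbers composed with the maps $\Gamma_\sigma\to\Gamma_\tau$ induced by inclusion of stabilizers when $\tau$ is a face of $\sigma$. By definition of the complex $C^{\rm stab}_*$ (with this differential), $E^2_{1,1}=H_1(C^{\rm stab}_*)$.

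\textbf{Row $q=2$.} Since $H_2(B\mathbb Z/m)=0$, we have $E^1_{0,2}=0$, and therefore $E^2_{0,2}=0$.

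The main obstacle I expect is identifying the $d^1$ differential rigorously, in particular seeing that the attaching data of the relative pieces induce the stabilizer inclusions $\Gamma_\sigma\hookrightarrow\Gamma_\tau$ on $H_1$. Stabilizers only grow on boundary faces (by the slice theorem, points near an orbit have stabilizers that are subgroups of that orbit's stabilizer). I would handle this by naturality: the inclusion of a neighborhood of $\mathring\sigma$ into a neighborhood of $\tau$ corresponds, on Borel constructions, to the map $B\Gamma_\sigma\to B\Gamma_\tau$ induced by the inclusion. The boundary map of the filtration then multiplies this by the cellular incidence degree.
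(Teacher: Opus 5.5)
Your proposal is correct and follows the paper's proof. Both filter the Borel construction over the skeleta of $B$, identify each relative piece with $B\Gamma_\sigma\times(D^p,S^{p-1})$ to obtain the $E^1$-page, and then read off the rows $q=0,1,2$ from the homology of $B\mathbb Z/m$. Your comparison with the cellular filtration of $B$ for the $q=0$ row, and your naturality argument for the $q=1$ differential, supply justifications that the paper leaves implicit.
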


\begin{proof}
Over the interior of a \(p\)-cell \(\sigma\), the Borel construction is
homotopy equivalent to \(B\Gamma_{\sigma}\times \mathring{D}^{p}\).
Excision therefore identifies the relative homology of successive filtered
pieces with \(H_{q}(B\Gamma_{\sigma})\), giving the stated
\(E^{1}\)-page.  For the second page, recall that for a finite cyclic group \(\Gamma\),
\[
    H_{q}(B\Gamma)
    \cong
    \begin{cases}
        \mathbb{Z}, & q=0,\\
        \Gamma, & q>0 \text{ odd},\\
        0, & q>0 \text{ even}.
    \end{cases}
\]
Hence the \(q=0\) row is the ordinary cellular chain complex of \(B\), while
the \(q=1\) row is \(C^{\mathrm{stab}}_{*}\). The claimed identifications
follow.
\end{proof}

\begin{lemma}\label{lem:stabilizer-homology}
There is a natural isomorphism
\[
    H_1(C_*^{\mathrm{stab}})
    \cong
    \bigoplus_{i=1}^r H_1(D_i; \mathbb Z/m_i).
\]
\end{lemma}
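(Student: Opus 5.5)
We will compute $H_1(C^{\mathrm{stab}}_*)$ directly from the structure of the chain complex, decomposing it according to the divisors. In $C^{\mathrm{stab}}_*$, the cells with nontrivial stabilizer are: the isolated orbifold points (vertices), the vertices at divisor intersections $D_i \cap D_j$, and the cells of each $D_i$ (vertices, edges, 2-cells), where interior cells of $D_i$ carry the generic stabilizer $\Gamma_i \cong \mathbb Z/m_i$. Cells of $B$ away from $D$ and the isolated points have trivial stabilizer and contribute nothing. The differential in the $q=1$ row is induced by the face maps of the Borel cellular complex. For a face $\tau \subset \bar\sigma$ these are the maps $H_1(B\Gamma_\sigma) \to H_1(B\Gamma_\tau)$ induced by the inclusions $\Gamma_\sigma \hookrightarrow \Gamma_\tau$, weighted by the incidence numbers $[\sigma:\tau]$.

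Step one is to separate the contributions. Isolated orbifold points are vertices not in the boundary of any cell with nontrivial stabilizer, and they are not boundaries of anything nontrivial. They therefore contribute only to $H_0$ and do not affect $H_1$. For each divisor $D_i$, the cells in the interior of $D_i$ (positive-dimensional cells, plus vertices that are not intersection points) all carry $\Gamma_i$, with identity maps between them. So on this part the complex is the cellular chain complex of $D_i$ with coefficients in $\mathbb Z/m_i$. At an intersection vertex $p \in D_i\cap D_j$, the group $\Gamma_p$ receives edges from both $D_i$ and $D_j$ through the inclusions $\Gamma_i \to \Gamma_p$ and $\Gamma_j \to \Gamma_p$.

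Step two is to handle the intersection vertices. Write $C^{\mathrm{stab}}_* = \bigoplus_i C_*(D_i;\mathbb Z/m_i)$, modified in degree $0$: the summands $\Gamma_i\oplus\Gamma_j$ at each intersection vertex are replaced by $\Gamma_p$ through the map $\iota_p:\Gamma_i\oplus\Gamma_j\to\Gamma_p$. Degrees $1$ and $2$ agree exactly with $\bigoplus_i C_*(D_i;\mathbb Z/m_i)$, since edges and faces lie in a single divisor and the divisors meet only at vertices, by transversality and the choice of CW structure. Hence $H_1(C^{\mathrm{stab}}_*) = \ker d_1/\mathrm{im}\, d_2$, and $\mathrm{im}\, d_2$ is unchanged. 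The kernel of $d_1$ in the modified complex equals the kernel of $\bigoplus_i \partial_1^{D_i}$ composed with the degree-$0$ comparison map $\bigoplus_i C_0(D_i;\mathbb Z/m_i)\to C_0^{\mathrm{stab}}$. This comparison is the identity on non-intersection vertices and $\iota_p$ at each intersection vertex. Hypothesis (3) says each $\iota_p$ is injective, so the comparison map is injective and the two kernels agree. Therefore $H_1(C^{\mathrm{stab}}_*) \cong \bigoplus_i H_1(D_i;\mathbb Z/m_i)$, and the isomorphism is natural because it is induced by a chain map.

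The main obstacle is to verify carefully that the $q=1$ row differential really is given by the inclusion-induced maps with the ordinary incidence numbers. The concern is that the Borel construction over a cell might twist $B\Gamma_\sigma$ as it approaches a face, since $\Gamma_\sigma\subset\Gamma_\tau$ is realized only after choosing slice neighborhoods. We would argue from the slice theorem. Near a point of $\tau$, the Borel construction is homotopy equivalent to $(T\times_{\Gamma_\tau} V)_{hT} \simeq V_{h\Gamma_\tau}$, and the stratum of $\sigma$ corresponds to the part of $V$ with stabilizer $\Gamma_\sigma$. The attaching map therefore induces $B\Gamma_\sigma\to B\Gamma_\tau$ from the subgroup inclusion. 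Since all groups are abelian, the conjugation ambiguity disappears, and on $H_1$ this map is the inclusion $\Gamma_\sigma\to\Gamma_\tau$. Cyclicity (hypothesis (1)) is used only to identify $H_1(B\Gamma)=\Gamma$ as in Lemma \ref{lem:spectralpage2}.
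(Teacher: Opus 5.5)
Your proposal is correct and is essentially the paper's argument. The paper also uses the chain map $\bigoplus_i C_*(D_i;\Gamma_i)\to C^{\mathrm{stab}}_*$ induced by the subgroup inclusions, notes that it is an isomorphism in degrees $1$ and $2$ and injective in degree $0$ by hypothesis (3), and reads off $H_1$ from the long exact sequence with cokernel concentrated in degree zero, which is your direct comparison of $\ker d_1$ and $\operatorname{im} d_2$. Your extra check that the $q=1$ differential is given by inclusion-induced maps, which the paper leaves implicit, is a welcome supplement; one small correction is that at a vertex of a single $D_i$ where the stabilizer jumps, the degree-zero comparison map is the inclusion $\Gamma_i\hookrightarrow\Gamma_v$ rather than the identity, but it is still injective, so the argument is unaffected.
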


\begin{proof}
Let
\[
    A_*:=\bigoplus_{i=1}^r C_*(D_i;\Gamma_i).
\]
The inclusions of the generic divisor stabilizers define a chain map
$A_*\to C_*^{\mathrm{stab}}$. Away from the zero-cells this map is an
isomorphism. At a divisor intersection, hypothesis \textup{(5)} makes the map
from the sum of the incident generic stabilizers injective. It follows that
there is a short exact sequence of chain complexes
\[
    0\longrightarrow A_*
    \longrightarrow C_*^{\mathrm{stab}}
    \longrightarrow Q_*
    \longrightarrow 0,
\]
where $Q_*$ is concentrated in degree zero. Hence
\[
    H_1(C_*^{\mathrm{stab}})
    \cong H_1(A_*)
    \cong \bigoplus_{i=1}^r H_1(D_i;\Gamma_i),
\]
as claimed.
\end{proof}

Lemma \ref{lem:stabilizer-homology} contains the main geometric content of Proposition \ref{p:torsion-formula}.  However we must further analyze the spectral sequence to show that no other terms can contribute torsion.  The main idea is to show that the orbifold points cannot contribute torsion by removing neighborhoods around them and relative cohomology and excision.  We first use this idea to derive relevant homology groups on the coarse space.  Let $P$ be the finite set of genuine coarse singular points of $B$. Choose disjoint
closed conical neighborhoods $V_p\cong cL_p$, where $L_p$ is the relevant lens space at $p$, and set
\[
    W:=B\setminus\bigcup_{p\in P}\Int(V_p).
\]
Then $W$ is a compact connected oriented four-manifold with
\[
    \partial W=\coprod_{p\in P}L_p.
\]

\begin{lemma}
\label{lem:topology-of-coarse-space}
One has
\[
    H_3(B)=0, \qquad \Tor H_2(W) = 0
\]
\end{lemma}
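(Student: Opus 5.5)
We will prove both claims by relating the coarse space $B$ to $M$ through the orbit map $\pi\colon M\to B$, and by using the standard facts that the link $L_p$ is a rational homology sphere with $H_1(L_p)$ finite cyclic.

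\textbf{Step 1: $H_1(B)=0$.} Since $\pi$ has connected fibres, the quotient map $\pi_1(M)\to\pi_1(B)$ is surjective. This is the usual fact that quotients of simply connected spaces by connected compact groups are simply connected. Hence $\pi_1(B)=0$ and $H_1(B)=0$.

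\textbf{Step 2: $H_3(B)=0$.} We use the Mayer--Vietoris sequence for $B=W\cup\bigcup_p V_p$. The intersection is $\coprod_p L_p$, and each $V_p$ is contractible. The relevant portion reads
\[
H_3(\textstyle\coprod L_p)\to H_3(W)\to H_3(B)\to H_2(\textstyle\coprod L_p)=0,
\]
where the last group vanishes because each $L_p$ is a lens space. So it suffices to show that $H_3(\partial W)\to H_3(W)$ is surjective. Lefschetz duality gives $H_3(W)\cong H^1(W,\partial W)$, and the long exact sequence gives
\[
H^0(W)\to H^0(\partial W)\to H^1(W,\partial W)\to H^1(W).
\]
Surjectivity then reduces to showing $H^1(W)=\operatorname{Hom}(H_1(W),\mathbb Z)=0$, that is, $b_1(W)=0$.

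To get $b_1(W)=0$, I would run Mayer--Vietoris once more. It gives
\[
\bigoplus_p H_1(L_p)\to H_1(W)\to H_1(B)=0,
\]
so $H_1(W)$ is a quotient of a finite group. Hence $H_1(W)$ is finite, $H^1(W)=0$, and $H_3(B)=0$.

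\textbf{Step 3: $\Tor H_2(W)=0$.} Again by Lefschetz duality and the universal coefficient theorem,
\[
\Tor H_2(W)\cong \Tor H^2(W,\partial W)\cong \Tor H_1(W,\partial W).
\]
The pair sequence
\[
H_1(\partial W)\to H_1(W)\to H_1(W,\partial W)\to \tilde H_0(\partial W)\to \tilde H_0(W)=0
\]
shows that $H_1(W,\partial W)$ is an extension of the free group $\tilde H_0(\partial W)$ by $\operatorname{coker}\bigl(H_1(\partial W)\to H_1(W)\bigr)$.

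It therefore suffices to show that $H_1(\partial W)\to H_1(W)$ is surjective, so that this cokernel vanishes. This follows from the Mayer--Vietoris segment in Step 2, since $H_1(B)=0$ and $H_1(V_p)=0$. Consequently $H_1(W,\partial W)$ is free, and $\Tor H_2(W)=0$.

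\textbf{Main obstacle.} The step I expect to need the most care is Step 1, namely justifying that the coarse orbifold quotient $B$ is simply connected and in particular that $H_1(B)=0$. I would establish this by lifting paths: slices exist by the slice theorem and the orbits are connected, so paths in $B$ lift to $M$. The rest is routine bookkeeping with exact sequences.
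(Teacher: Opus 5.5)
Your proof is correct and follows essentially the paper's route: in both, the key input is that $H_1(B)=0$ forces $H_1(\partial W)\to H_1(W)$ to be surjective, which yields $H_1(W,\partial W)\cong\mathbb Z^{|P|-1}$ free (hence $\Tor H_2(W)=0$ via Lefschetz duality and universal coefficients) and $H^1(W)=0$. The differences are minor: you use Mayer--Vietoris where the paper uses van Kampen and the pair $(B,W)$ with excision, and you get surjectivity of $H_3(\partial W)\to H_3(W)$ directly from $H^1(W)=0$ instead of first computing $H_3(W)\cong\mathbb Z^{|P|-1}$.
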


\begin{proof}
Since \(B\) is obtained from \(W\) by attaching the contractible cones
\(cL_p\), van Kampen's theorem gives
\[
    \pi_1(B)
    \cong
    \pi_1(W)\Big/ \left\langle\operatorname{im}\pi_1(\partial W)
    \right\rangle.
\]
Since \(B\) is simply connected, \(\pi_1(W)\) is normally generated by
the images of the groups \(\pi_1(L_p)\). Passing to abelianizations,
we conclude that
\[
    H_1(\partial W)
    \longrightarrow
    H_1(W)
\]
is surjective.

The long exact homology sequence of the pair \((W,\partial W)\)
therefore gives an isomorphism
\[
    H_1(W,\partial W)
    \cong
    \ker\!\left(
        H_0(\partial W)
        \longrightarrow
        H_0(W)
    \right).
\]
Since \(W\) is connected and \(\partial W\) has \(|P|\) connected
components, the map on \(H_0\) is the summation map from $\mathbb Z^{|P|}$ to $
    \mathbb Z$,
thus
\[
    H_1(W,\partial W)
    \cong
    \mathbb Z^{|P|-1}.
\]
In particular, \(H_1(W,\partial W)\) is free abelian.
The universal coefficient theorem then implies that
\(H^2(W,\partial W)\) is free abelian. By
Poincar\'e--Lefschetz duality,
\[
    H_2(W)
    \cong
    H^2(W,\partial W),
\]
so \(H_2(W)\) is free abelian.

We next prove that \(H_3(B)=0\). The long exact cohomology
sequence of \((W,\partial W)\), together with the injectivity of
\[
    H^1(W)
    \longrightarrow
    H^1(\partial W),
\]
gives
\[
    H^1(W,\partial W)
    \cong
    \operatorname{coker}\!\left(
        H^0(W)
        \longrightarrow
        H^0(\partial W)
    \right)
    \cong
    \mathbb Z^{|P|-1}.
\]
Thus Poincar\'e--Lefschetz duality gives $H_3(W)
    \cong
    \mathbb Z^{|P|-1}$.
By excision and the contractibility of each cone \(cL_p\),
\[
    H_4(B,W)
    \cong
    \bigoplus_{p\in P}H_4(cL_p,L_p)
    \cong
    \bigoplus_{p\in P}H_3(L_p)
    \cong
    \mathbb Z^{|P|},
\]
whereas
\[
    H_3(B,W)
    \cong
    \bigoplus_{p\in P}H_2(L_p)
    =0.
\]
The boundary homomorphism
\[
    H_4(B,W)
    \longrightarrow
    H_3(W)
\]
maps the relative fundamental class of \(V_p\) to the fundamental
class of the corresponding boundary component \(L_p\). These boundary
classes generate \(H_3(W)\), with the unique relation that
their sum is zero. Hence the boundary homomorphism is surjective.
The long exact sequence of the pair \((B,W)\) hence yields $H_3(B)=0$, as claimed.
\end{proof}

\begin{lemma}
\label{lem:spectral-sequence-stabilization}
One has
\[
    E^\infty_{1,1}\cong E^2_{1,1}, \qquad \Tor E^{\infty}_{2,0} = 0.
\]
\end{lemma}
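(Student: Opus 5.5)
We analyze the two claims separately, using the shape of the $E^2$-page from Lemma \ref{lem:spectralpage2}. This is a first-quadrant homological spectral sequence with differentials $d^r: E^r_{p,q}\to E^r_{p-r,q+r-1}$, and the coarse space $B$ has dimension $4$, so $E^r_{p,q}=0$ for $p>4$.

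\emph{The term $E^\infty_{1,1}$.} Differentials leaving $E^r_{1,1}$ land in $E^r_{1-r,\,r}$ with $1-r<0$, so they vanish for all $r\ge 2$. The only incoming differential is $d^2: E^2_{3,0}\to E^2_{1,1}$; for $r\ge3$ the sources $E^r_{1+r,2-r}$ lie in negative $q$. So it suffices to show that $d^2$ vanishes on $E^2_{3,0}$. Since the bottom row is the cellular chain complex of $B$, we have $E^2_{3,0}\cong H_3(B)$, and this is zero by Lemma \ref{lem:topology-of-coarse-space}. Hence $E^\infty_{1,1}=E^3_{1,1}=E^2_{1,1}$.

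\emph{The term $E^\infty_{2,0}$.} Differentials into $E^r_{2,0}$ come from $q<0$ and vanish. The outgoing differential $d^2:E^2_{2,0}\to E^2_{0,1}$ is possibly nonzero, and $E^3_{2,0}=\ker d^2$; all higher $d^r$ land in $p<0$. Thus $E^\infty_{2,0}$ is a subgroup of $E^2_{2,0}\cong H_2(B)$, and it suffices to prove that $H_2(B)$ is torsion-free. For this we use the decomposition $B=W\cup\bigcup_{p\in P} cL_p$ and the long exact sequence of the pair $(B,W)$:
\[
H_2(W)\to H_2(B)\to H_2(B,W)\to H_1(W).
\]
By excision, $H_2(B,W)\cong\bigoplus_{p\in P} H_1(L_p)$, which is finite. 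The map $H_2(B,W)\to H_1(W)$ factors as the boundary map $\bigoplus_p H_1(L_p)\to H_1(\partial W)$, which is an isomorphism, followed by the inclusion into $H_1(W)$.

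\emph{The main obstacle} is controlling this last step: the kernel of $H_1(\partial W)\to H_1(W)$ could in principle feed finite cyclic torsion into $H_2(B)$, even though $\Tor H_2(W)=0$ by Lemma \ref{lem:topology-of-coarse-space}. My first attempt is to compare with the Borel construction and show that such classes are killed. A torsion class $\alpha\in H_2(B)$ coming from $\ker(H_1(L_p)\to H_1(W))$ is detected near the points $p$ via the link $L_p=S^3/\Gamma_p$, and over a neighbourhood of $p$ the Borel construction is $B\Gamma_p\simeq$ the Borel construction of the lens space $L_p$. I would then check that the same class is exactly the image of $d^2$ from $E^2_{2,0}$ to the stabilizer row over the vertex $p$. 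If this identification holds, $\ker d^2$ misses the torsion, giving $\Tor E^\infty_{2,0}=0$.

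A cleaner alternative, which I would try alongside, is to use that $H^3(B;\mathbb Q)=0$ together with orbifold Poincar\'e duality over $\mathbb Z$ away from $P$: we have $H_2(B)\cong H_2(B,P)$, and $H_2(B,P)\cong H^2(W)$, where $H^2(W)$ has torsion $\operatorname{Ext}(H_1(W),\mathbb Z)$. Finally, surjectivity of $H_1(\partial W)\to H_1(W)$, established in the proof of Lemma \ref{lem:topology-of-coarse-space}, combined with cyclicity of the $\Gamma_p$, should let us match this torsion with the image of $d^2$.
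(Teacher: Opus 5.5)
Your argument for $E^\infty_{1,1}\cong E^2_{1,1}$ is correct and is the paper's argument. The second claim has a genuine gap.

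\textbf{Where the gap is.} You reduce $\Tor E^\infty_{2,0}=0$ to torsion-freeness of $E^2_{2,0}\cong H_2(B)$. You correctly note that $\Tor H_2(W)=0$ does not yield this through the sequence of the pair $(B,W)$. After that you only describe checks you would make (``if this identification holds'', ``should let us match''), and none of the inputs you cite can close the gap.

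Simple connectivity of $B$, surjectivity of $H_1(\partial W)\to H_1(W)$, cyclicity of the $\Gamma_p$, and $\Tor H_2(W)=0$ all hold for $B=\mathbb{CP}^2/(\mathbb{Z}/3)$ with $\zeta\cdot[x:y:z]=[x:\zeta y:\zeta^2 z]$. This $B$ has three isolated $\tfrac13(1,2)$ points. Yet $\pi_1(W)\cong\mathbb{Z}/3$, and your own duality $H_2(B)\cong H_2(B,P)\cong H^2(W)$ gives $H_2(B)\cong\mathbb{Z}\oplus\mathbb{Z}/3$. So some input from $M$, or from the structure of $\bar B$ over $P$, is indispensable, and your sketch never supplies it.

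\textbf{How the paper supplies it.} The paper passes to the Borel construction over $W$. Set $\bar W=q^{-1}(W)$. Excision and the Thom isomorphism for the pairs $(E\Gamma_p\times_{\Gamma_p}D^4,\,E\Gamma_p\times_{\Gamma_p}S^3)$ give $H_k(\bar B,\bar W)\cong\bigoplus_p H_{k-4}(B\Gamma_p)$. Hence $H_2(\bar W)\to H_2(\bar B)$ is an isomorphism, and $E^\infty_{2,0}(\bar B)$ is identified with $E^\infty_{2,0}(\bar W)\subseteq H_2(W)$, which is free.

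A clean way to see this identification: $E^\infty_{2,0}$ is the image of the edge map $q_*\colon H_2(\bar B)\to H_2(B)$. It therefore lies in the image of $H_2(W)$. Moreover $H_2(W)\to H_2(B)$ is injective, because $H_3(B,W)\cong\bigoplus_p H_2(L_p)=0$. This gives the injectivity of $d^2$ on $\Tor H_2(B)$ that you hoped to verify by hand, without computing $d^2$.

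\textbf{How your own route can be finished.} Your duality route closes once you prove $H_1(W)=0$.
\begin{itemize}
\item The preimage $\pi^{-1}(W)\subset M$ is $M$ with tubular neighbourhoods of finitely many codimension-four orbits removed. It is therefore simply connected, since $\pi_1(M)=0$.
\item The map $\pi_1(\pi^{-1}(W))\to\pi_1(W)$ is onto because $T^2$ is connected.
\item Hence $H_1(W)=0$, and $H_2(B)\cong H^2(W)$ is free.
\end{itemize}
This even gives the stronger statement that $E^2_{2,0}$ itself is torsion-free. Either way, the missing step is a use of the hypothesis on $M$ or of the local structure of $\bar B$ over $P$; properties of $B$ alone do not suffice.
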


\begin{proof}
The only possible differential entering \(E^2_{1,1}\) is $d_2:E^2_{3,0}\longrightarrow E^2_{1,1}$.  By Lemma~\ref{lem:topology-of-coarse-space}, $E^2_{3,0}\cong H_3(B)=0$.  As no differential leaves \(E^2_{1,1}\), and no differential
\(d_r\), with \(r\geq 3\), can enter it, it follows that $E^\infty_{1,1}\cong E^2_{1,1}$.

We next prove that \(E^\infty_{2,0}\) is free. Let $q:\bar B\longrightarrow B$
be the Borel projection, and set $\bar W:=q^{-1}(W), \bar{V}_p := q^{-1} (V_p)$.  By excision,
\[
    H_k(\bar B,\bar W)
    \cong
    \bigoplus_{p\in P}
    H_k(\bar V_p,\partial\bar V_p).
\]
Near the orbit lying over \(p\), the \(T^2\)-action has local model $T^2\times_{\Gamma_p}D^4$, thus 
\[
    (\bar V_p,\partial\bar V_p)
    \simeq
    \left(
        E\Gamma_p\times_{\Gamma_p}D^4,\,
        E\Gamma_p\times_{\Gamma_p}S^3
    \right).
\]
This is the Thom pair of the oriented rank-four vector bundle
\[
    E\Gamma_p\times_{\Gamma_p}\mathbb R^4
    \longrightarrow
    B\Gamma_p.
\]
The Thom isomorphism therefore gives $
    H_k(\bar V_p,\partial\bar V_p)
    \cong
    H_{k-4}(B\Gamma_p)$, hence
\[
    H_2(\bar B,\bar W)
    =
    H_3(\bar B,\bar W)
    =0.
\]
Thus the long exact sequence of the pair gives an isomorphism
\[
    H_2(\bar W)
    \xrightarrow{\;\cong\;}
    H_2(\bar B).
\]
We further require the corresponding statement for the filtrations defining
the spectral sequences.  Without loss of generality we can assume that the CW structure is 
compatible with the inclusion $\bar W\subset\bar B$
and with the cellular filtrations used to construct these spectral
sequences. For each \(p\in P\), the Thom pair
\[
    \left(
        E\Gamma_p\times_{\Gamma_p}D^4,\,
        E\Gamma_p\times_{\Gamma_p}S^3
    \right)
\]
admits a relative CW structure whose cells are obtained from the cells
of \(B\Gamma_p\) by increasing their dimensions by four. Hence the
relative pair \((\bar B,\bar W)\) has no relative cells in dimensions at most
three.

It follows that the inclusion of filtered cellular chain complexes
\[
    C_*(\bar W)
    \longrightarrow
    C_*(\bar B)
\]
is an isomorphism in chain degrees at most three. Since the filtration
on second homology is determined by the filtered portion
\[
    C_3\longrightarrow C_2\longrightarrow C_1,
\]
the inclusion identifies the induced filtrations on
\(H_2(\bar W)\) and \(H_2(\bar B)\). Passing to
associated graded groups therefore gives $E^\infty_{2,0}(\bar W)
    \cong
    E^\infty_{2,0}(\bar B)$.

It remains to consider the spectral sequence for \(\bar W\). Its
\((2,0)\)-term is
\[
    E^2_{2,0}(\bar W)
    \cong
    H_2(W).
\]
Because the spectral sequence is concentrated in the first quadrant,
there are no incoming differentials to \(E^r_{2,0}(\bar W)\).  For \(r\geq 3\), the target of \(d_r\) has negative first index, hence the only
possible outgoing differential is
\[
    d_2:
    E^2_{2,0}(\bar W)
    \longrightarrow
    E^2_{0,1}(\bar W);
\]
Consequently,
\[
    E^\infty_{2,0}(\bar W)
    =
    E^3_{2,0}(\bar W)
    =
    \ker\!\left(
        d_2:
        H_2(W)
        \longrightarrow
        E^2_{0,1}(\bar W)
    \right).
\]
Thus \(E^\infty_{2,0}(\bar W)\) is a subgroup of
\(H_2(W)\). By
Lemma~\ref{lem:topology-of-coarse-space}, the latter group is free
abelian. Hence \(E^\infty_{2,0}(\bar W)\) is free abelian, and the lemma follows.
\end{proof}

\begin{proof}[Proof of Proposition~\ref{p:torsion-formula}] Because $E_{0,2}^{\infty} = 0$, the filtration of $H_2(\bar{B})$ associated to the spectral sequence gives a short exact
sequence
\[
    0\longrightarrow E^\infty_{1,1}
    \longrightarrow H_2(\bar{B})
    \longrightarrow E^\infty_{2,0}
    \longrightarrow 0,
\]
By Lemma~\ref{lem:spectral-sequence-stabilization}, $E^{\infty}_{2,0}$ is free, so this sequence splits to yield 
\[
    \Tor H_2(\bar{B})=E^\infty_{1,1}.
\]
Putting together Lemmas \ref{lem:spectralpage2},\ref{lem:stabilizer-homology} and \ref{lem:spectral-sequence-stabilization} thus identifies $H_2(\bar{B})$, and Lemma \ref{lem:borel-to-total-space} finishes the proposition.
\end{proof}

\subsection{Main proof}

\begin{proof}[Proof of Theorem \ref{t:BHE3foldsrank0}] 
We first claim $H_*(M)$ is torsion-free.  By our assumption of coprimality of the divisor subgroups, it follows from Lemmas \ref{l:isolatedcyclicity} and \ref{lem:cyclic-intersection-stabilizer} that $B$ is a cyclic orbifold, and indeed all the hypotheses of Proposition \ref{p:torsion-formula} hold.  Since by assumption every orbifold divisor \(D_i\) has genus zero, it follows from Proposition \ref{p:torsion-formula} that $H_2(M)$ is torsion-free.  Then $H_*(M)$ is torsion-free by Lemma \ref{lem:torsionfree}.  Using this we prove that the cup product vanishes on $H^2(M)$.  Working first with real coefficients, and using that $H^*_{\mathrm{orb}}(B;\mathbb R) \cong H^*(\bar{B},; \mathbb R)$, it follows from Lemma \ref{lem:borel-to-total-space} that 
\[
\pi^* \colon H^2_{\mathrm{orb}}(B;\mathbb{R}) \longrightarrow H^2(M;\mathbb{R})
\]
is surjective.  As the orbifold Euler class is nonvanishing, the argument of Lemma \ref{lem:p1-vanishing} therefore shows that $\pi^* \colon H^4_{\mathrm{orb}}(B;\mathbb{R}) \longrightarrow H^4(M;\mathbb{R})$ is the zero map, and it then follows easily that the cup product vanishes on $H^2(M;\mathbb{R})$. Since $H_*(M)$ is torsion-free, the cup product vanishes on
$H^2(M)$ as well.  Noting finally that $M$ is spin by Lemma \ref{l:spin}, and $p_1(M)$ vanishes by Lemma \ref{lem:p1-vanishing}, the result follows from the Wall-Jupp classification \cite{jupp1973classification,wall1966classification}. 
\end{proof}


\end{document}